\newtheorem{theorem}{Theorem}[section]
\newtheorem{lemma}[theorem]{Lemma}
\newtheorem{remark}[theorem]{Remark}
\numberwithin{equation}{section}
\newcommand{\set}[1]{\{#1\}}
\newcommand{\abs}[1]{\left\vert#1\right\vert}
\newcommand{\EPT}[1]{\mathrm{E}[#1]}
\DeclareMathOperator*{\diff}{\mathrm{d}}
\begin{document}
\title[Eigenvalues for sub-fractional Brownian motion]{Asymptotics
of Karhunen-Lo{\`e}ve Eigenvalues for sub-fractional Brownian motion and its application}
\author{Chun-Hao CAI}
\address{School of Mathematics(Zhuhai), Sun Yat-sen University \newline
  \indent Tangjia Bay, Sun Yat-sen University\newline
  \indent Zhuhai, Guangdong, 519082, P. R. China.\newline
  \indent Email: \url{caichunhao2021@163.com}
}
\author{Jun-Qi HU}
\address{School of Mathematics, SHUFE\newline
\indent 777, Guoding Road,\newline
\indent Shanghai, 200433, P. R. China.\newline
\indent Email: \url{junqihu@shufe.edu.cn}
}
\author{Ying-Li WANG}
\address{School of Mathematics, SHUFE\newline
\indent 777, Guoding Road,\newline
\indent Shanghai, 200433, P. R. China.\newline
\indent Email: \url{naturesky1994@163.com}
}
\subjclass{60G15, 60G22, 47B40}
\keywords{sub-fractional Brownian motion; Karhunen-Lo{\`e}ve Eigenvalues; small $L^2$-ball estimates}
\begin{abstract}
  In the present paper, the Karhunen-Lo{\`e}ve eigenvalues for a sub-fractional
  Brownian motion are considered. Rigorous large $n$
  asymptotics for those eigenvalues are shown, based on functional analysis
  method. By virtue of these asymptotics, along with some standard large
  deviations results, asymptotical estimates for the small $L^2$-ball
  probabilities for a sub-fractional Brownian motion are derived. By the way,
  asymptotic analysis on the Karhunen-Lo{\`e}ve eigenvalues for the corresponding
  ``derivative'' process is also established.
\end{abstract}
\maketitle
\section{Introduction}

The eigenproblem for a centred stochastic process $X=(X(t))_{t\in[0,1]}$ over a
probability space $(\Omega,\mathscr{F},P)$ with covariance function $K(s,t)=\EPT{X(s)X(t)}$
consists of finding all pairs $(\lambda,\varphi)$ satisfying the equation
\begin{equation}\label{egeqrtgsp}
  K\varphi=\lambda\varphi
\end{equation}
in $L^{2}([0,1])$, where the corresponding linear operator is defined by
\begin{equation}
  (K\varphi)(t)\triangleq\int_0^1K(s,t)\varphi(s)\diff s,\quad\forall t\in[0,1].
\end{equation}
If $K(s,t)$ is square integrable, then $K:L^2([0,1])\to L^2([0,1])$ is self-adjoint, positive
and compact. Hence, the eigenvalues $\set{\lambda_n}$ for the operator $K$ are
nonnegative, and converge to zero after arranged in decreasing order. The corresponding normalised
eigenfunctions $\set{\varphi_n}$ form a complete orthonormal basis in $L^2([0,1])$.

In addition, if $(X(t))_{t\in[0,1]}$ is a square integrable process with zero mean and
continuous covariance, there exists a Karhunen-Lo{\`e}ve expansion(cf.
\cite{tsp}). More precisely, it admits a representation over $[0,1]$ as a uniformly $L^2(\Omega)$-convergent series:
\begin{equation}
  X(t)=\sum_{n=0}^{\infty}\sqrt{\lambda_{n}}\xi_n\varphi_n(t),
\end{equation}
where $\set{\xi_n}$ are orthonormal(i.e. $\EPT{\xi_j\xi_k}=\delta_{jk}$) random variables in $L^2(\Omega)$ with zero mean.
Since the Karhunen-Lo{\`e}ve expansion is an influential tool in analysing
the properties of stochastic processes, $\set{\lambda_n}$ are also called Karhunen-Lo{\`e}ve
eigenvalues for $(X(t))_{t\in[0,1]}$.

There are many applications relevant to the eigenproblems for stochastic
processes: asymptotics of the small ball probabilities(cf. \cite{sb}), sampling from heavy tailed distributions(cf. \cite{VT2013PNERD}) and so on.

On most occasions, such kind of eigenvalues and eigenfunctions are notoriously
hard to find explicitly. One exception is for the standard Brownian motion $B=(B_t)_{t\in[0,1]}$, where
\begin{equation}
  \lambda_n=\frac1{(n+1/2)^2\pi^2}
\end{equation}
and
\begin{equation}\label{eigfunsforbm}
  \varphi_n(t)=\sqrt{2}\sin(n+1/2)\pi{t}
\end{equation}
for $n=0,1,2,\cdots$.
This problem can be easily solved by reducing \eqref{egeqrtgsp} to a simple
boundary value problem for an ordinary differential equation(cf. \cite{tsp}).

A widely used extension of Brownian motion is fractional Brownian motion $B^{H}=(B^{H}(t))_{t\in[0,1]}$. Its covariance function is
\begin{equation}
  K^H(s,t)=\frac12(s^{2H}+t^{2H}-\abs{s-t}^{2H}),
\end{equation}
where $H\in(0,1)$ is called its Hurst exponent. The case $H=\frac12$
corresponds to Brownian motion. There are some important properties of fractional Brownian motion. For examples, it
has self-similarity and stationary increments(cf. \cite{scfbm}). The eigenproblem
of fractional Brownian motion has been discussed in several papers(cf. \cite{KL,eigenfbm}).

The author in
\cite{KL} used functional analysis method, and obtained the asymptotics of the eigenvalues for
fractional Brownian motion. The following is just a rephrasing of one of his results:
\begin{theorem}[J. C. Bronski, 2003]\label{rstinbr2003}
  For the fractional Brownian motion with Hurst exponent $H\in(0,1)$,
  its Karhunen-Lo{\`e}ve eigenvalues satisfies the large $n$ asymptotics
  \begin{equation}
    \lambda_n
    =\frac{\sin(\pi{H})\Gamma(2H+1)}{(n\pi)^{2H+1}}+o(n^{-\frac{(2H+2)(4H+3)}{4H+5}+\delta})
  \end{equation}
  for every $\delta>0$, where $\Gamma$ denotes the usual Euler gamma function.
\end{theorem}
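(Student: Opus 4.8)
The plan is to follow the functional-analytic route: replace $K^{H}$ by a translation-invariant ``model'' operator whose spectrum is understood classically, and then control the discrepancy by perturbation arguments. The first step is to isolate the part of the kernel that governs the decay rate of the eigenvalues. Writing
\begin{equation}
  K^{H}(s,t)=-\tfrac12\abs{s-t}^{2H}+\tfrac12\bigl(s^{2H}+t^{2H}\bigr),
\end{equation}
the first summand carries the diagonal singularity, whereas $\tfrac12(s^{2H}+t^{2H})$, although not smooth at the origin when $2H<1$, is much more regular and, more to the point, negligible against the first term at high frequencies. So I would compare $K^{H}$ with the Wiener--Hopf operator $T_{H}$ on $L^{2}([0,1])$ with convolution kernel $-\tfrac12\abs{s-t}^{2H}$ (after subtracting a fixed finite-rank piece, or truncating near the endpoints, to retain positivity).

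Next I would compute the symbol. In the sense of tempered distributions the Fourier transform of $\abs{x}^{2H}$ is a negative constant multiple of $\abs{\xi}^{-(2H+1)}$, and keeping track of that constant one finds that $T_{H}$ is, modulo a smoothing remainder, a pseudodifferential operator of order $-(2H+1)$ with principal symbol $\sin(\pi H)\Gamma(2H+1)\abs{\xi}^{-(2H+1)}$ (note that $\sin(\pi H)\Gamma(2H+1)>0$ for $H\in(0,1)$). The spectral asymptotics of such an operator compressed to an interval of length one are then classical: a Weyl-type / Bohr--Sommerfeld count gives that its $n$-th eigenvalue is asymptotic to $\sin(\pi H)\Gamma(2H+1)(n\pi)^{-(2H+1)}$, the frequency $n\pi$ being the $n$-th admissible mode on $[0,1]$, with the half-integer shift familiar from the Brownian case $H=\tfrac12$ absorbed into the error at this order. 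An equivalent and perhaps cleaner implementation uses the Molchan--Golosov representation $B^{H}(t)=\int_{0}^{t}\mathcal{K}_{H}(t,s)\,\diff B_{s}$ to write $K^{H}=\mathcal{K}_{H}\mathcal{K}_{H}^{*}$ and then invokes the known singular-value asymptotics for Riemann--Liouville fractional integration of order $H+\tfrac12$, the constant $\sin(\pi H)\Gamma(2H+1)$ reappearing through the normalisation of that operator once one squares.

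The hard part is the error term, where I expect essentially all of the work to lie. The crude estimate $\abs{\lambda_{n}(K^{H})-\lambda_{n}(T_{H})}\le\|K^{H}-T_{H}\|$ is far too weak. To do better one needs (i) quantitative regularity for the eigenfunctions $\varphi_{n}$ --- roughly, that $\varphi_{n}$ oscillates at frequency comparable to $n$ in the interior of $[0,1]$ and develops boundary layers of width comparable to $1/n$ near the endpoints --- together with (ii) a localised perturbation bound in which the endpoint part $\tfrac12(s^{2H}+t^{2H})$ enters only through its interaction with those boundary layers. Cutting $[0,1]$ into a boundary zone of width $\varepsilon$ and its complement, one is left with two competing errors --- a boundary contribution behaving like a negative power of $\varepsilon$ times a power of $n$, and an interior contribution behaving like a positive power of $\varepsilon$ times another power of $n$ --- and balancing them by an optimal choice $\varepsilon=\varepsilon(n)$ is precisely what manufactures the cumbersome exponent $\tfrac{(2H+2)(4H+3)}{4H+5}$, the arbitrary $\delta>0$ soaking up logarithmic losses from the cut-off. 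Making (i) and (ii) rigorous --- in particular obtaining honest endpoint decay and regularity bounds for $\varphi_{n}$ that are uniform in $n$ --- is the principal obstacle; by contrast the symbol computation and the leading-order Weyl statement should be fairly routine.
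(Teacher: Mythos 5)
Your leading-order analysis is sound: the constant $\sin(\pi H)\Gamma(2H+1)$ does come from the Fourier transform of $-\tfrac12\abs{x}^{2H}$, and $n\pi$ (with the half-integer shift absorbed into the error) is the right mode count. But the actual content of this theorem is the error exponent $\tfrac{(2H+2)(4H+3)}{4H+5}$, and that is precisely what your proposal does not supply. You acknowledge that your steps (i) and (ii) --- uniform interior oscillation and endpoint boundary-layer estimates for the eigenfunctions $\varphi_n$ of $K^H$, together with a localised perturbation bound --- are ``the principal obstacle'', but they are not a technicality to be filled in later: no such a priori eigenfunction estimates are available for $K^H$ (obtaining them is essentially as hard as solving the eigenproblem itself, which is what the Laplace-transform analysis of \cite{eigenfbm} accomplishes by entirely different means), and the claim that balancing a boundary cutoff $\varepsilon(n)$ ``manufactures'' the exponent $\tfrac{(2H+2)(4H+3)}{4H+5}$ is asserted rather than derived. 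As written, the proposal establishes only the Weyl-type leading term with an unquantified $o(n^{-2H-1})$ remainder.

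The argument actually behind this statement (Bronski's, which Section 4 of this paper reproduces and adapts to the sub-fractional case) avoids eigenfunction analysis entirely. One represents $K^H$ as an infinite matrix $A_{n,m}$ in the \emph{fixed} basis $\varphi_n(t)=\sqrt2\sin((n+\tfrac12)\pi t)$; a double integration by parts eliminates the regular part $\tfrac12(s^{2H}+t^{2H})$ for free (its mixed partial derivative vanishes, and the boundary terms vanish since $\cos((n+\tfrac12)\pi)=0$ and $K^H(0,y)=0$), leaving oscillatory integrals against $\abs{x-y}^{2H-2}$ that are evaluated by Lemma \ref{rfmlmrtkl}. Splitting $A=D+O$ into diagonal and off-diagonal pieces, one gets $D_{n,n}=\sin(\pi H)\Gamma(2H+1)(n^*)^{-2H-1}+O(n^{-3})$ and $O_{n,m}\asymp m^{-2}n^{-2H}$ for $m>n\gg1$; factoring $O=D^{\beta}\widehat{O}D^{\beta}$ with $\widehat{O}$ Hilbert--Schmidt yields $\abs{\lambda_n(O)}\lesssim n^{-2H-\frac32+\delta}$ via Lemma \ref{egvlforcgtfofcop}, and Lemma \ref{egvlforamtfofcop} applied with $j=n^\alpha$, $\alpha=\tfrac{4H+4}{4H+5}$, balances $n^{-2H-2+\alpha}$ against $n^{-\alpha(2H+\frac32-\delta)}$. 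It is this balancing, inside the abstract eigenvalue inequality for sums of compact operators and not inside any boundary-layer cutoff, that produces $\tfrac{(2H+2)(4H+3)}{4H+5}$. To complete your argument you would need to replace (i)--(ii) by these two operator inequalities (or prove the eigenfunction estimates you postulate, which is a substantially harder problem); until then the error term remains unproved.
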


The authors in \cite{eigenfbm} converted the eigenproblem for fractional Brownian
motion into an integro-algebraic systems by using Laplace transform, and solved
it by taking the inversion of the Laplace transform. And some other processes
derived by Brownian motion like Brownian bridge(cf. \cite{gaussianbridge}), the
Ornstein-Uhlenbeck process (cf. \cite{fractionalOU}),
etc., can be solved in a similar way. Compared to \cite{KL}, the profile of
eigenpair analysed with the method in \cite{eigenfbm} is more complete and
accurate.

Similar to the fractional Brownian motion, the sub-fractional Brownian motion
also presents the properties of self-similarity and long-range dependence (when the
Hurst exponent $H>\frac12$). Different from the fractional Brownian motion, there
is an additional term $|t+s|^{2H}$ in its covariance and the increment is not
stationary as a result. From this point of view, it was expected that the idea in \cite{eigenfbm} could also work for
sub-fractional Brownian motion. However, it
seems that it DOES NOT work for sub-fractional Brownian motion because of loss of some translation structure.

Consequently, the main results in this paper are based on the idea in \cite{KL}. It should
be pointed out that there are some flaws in \cite{KL}. To some extent, the
results(see Remark \ref{flawinKLl} in Section 4.) in this paper are supplements and corrections of the ones in \cite{KL}.

This paper is organised as follows. In the next section, the asymptotics of eigenvalues
for sub-fractional Brownian motion and its derivative process are going to be stated. As an
application of those results, the small ball estimates for sub-fractional Brownian
motion will be presented in Section 3., but its proof will be omitted since it's just a
duplication of the one in \cite{KL}. Section 4 will be concluded with
the details of the proofs of the main results.

\section{The main results}

Sub-fractional Brownian motion(sfBm) $B_{sub}^H=(B_{sub}^H(t))_{t\in[0,1]}$ is a centred long-range dependence Gaussian process. Like fractional Brownian motion, its covariance function is
\begin{equation}
  K_{sub}^H(s,t)=s^{2H}+t^{2H}-\frac12[(s+t)^{2H}+\abs{s-t}^{2H}]
\end{equation}
with an exponent $H\in(0,1)$. The case $H=\frac12$ also corresponds to Brownian motion. To some extent, sfBm is intermediate between Brownian motion and fractional Brownian motion(cf. \cite{sfbm}). This is reflected in the nonstationarity and correlation of the increments and the covariance of the non-overlapping intervals. The increments on non-overlapping intervals are more weakly correlated than fractional Brownian motion and the covariance decays polynomially at a higher rate.

In this paper, the eigenproblems for the following two operators
\begin{align}
  (K_{sub}^H\varphi)(t)&=\int_0^1(s^{2H}+t^{2H}-\frac12[(s+t)^{2H}+\abs{s-t}^{2H}])\varphi(s)\diff s,\label{cvopforsfbm}\\
  (\widetilde{K}_{sub}^H\varphi)(t)&=\int_0^1H(2H-1)(\abs{s-t}^{2H-2}-(s+t)^{2H-2})\varphi(s)\diff s\label{cvopfordrpsfbm}
\end{align}
are studied. The operator in \eqref{cvopforsfbm} for $H\in(0,1)$ is related to sfBm itself, and the one in \eqref{cvopfordrpsfbm} for $H\in(\frac{1}{2},1)$ corresponds to
the formal derivative of the sfBm. In fact, the operator $\widetilde{K}_{sub}^H$ determines the correlation
structure of Wiener integrals of square integrable deterministic functions through the formula
\begin{equation}
  \EPT{\int_0^1f\diff B_{sub}^H\int_0^1g\diff B_{sub}^H}
  =\int_0^1f(t)\,(\widetilde{K}_{sub}^Hg)(t)\diff t.
\end{equation}

To the best of the authors' knowledge, those eigenproblems haven't been
rigorously considered before. Borrowed the idea from \cite{KL}, rough
asymptotics of eigenvalues of sub-fractional Brownian motion are derived:
\begin{theorem}\label{1strstinpp}
  The Karhunen-Lo{\`e}ve eigenvalues of sub-fractional Brownian motion with
  exponent $H\in(0,1)$ satisfies
  \begin{itemize}
  \item Case $0<H<\frac{-1+\sqrt{74}}{8}$:
    \begin{equation}
      \lambda_n=\frac{\gamma_H}{n^{2H+1}}+o(n^{-\frac{(2H+2)(4H+3)}{4H+5}+\delta})
    \end{equation}
    for every $\delta>0$ and $n\gg1$;
  \item Case $\frac{-1+\sqrt{74}}{8}\le{H}<1$:
    \begin{equation}
      \lambda_n=\frac{\gamma_H}{n^{2H+1}}+O(n^{-3})
    \end{equation}
  \end{itemize}
  for every $n\gg1$, where $\gamma_H=\frac{2\sin(\pi{H})\Gamma(2H+1)}{\pi^{2H+1}}$.
\end{theorem}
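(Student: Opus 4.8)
The plan is to follow the functional-analytic strategy of Bronski \cite{KL}: split the covariance operator $K_{sub}^H$ into the fractional Brownian motion operator $K^H$ plus a remainder coming from the extra $|s+t|^{2H}$ term, show that the remainder is a smoothing (hence small-eigenvalue) perturbation, and then invoke a Weyl-type perturbation inequality together with Theorem \ref{rstinbr2003}. Concretely, write
\begin{equation}
  K_{sub}^H(s,t)=2K^H(s,t)+\tfrac12\bigl(|s-t|^{2H}-|s+t|^{2H}\bigr),
\end{equation}
where $K^H$ is the fBm covariance; denote the operator associated to the second summand by $R$, so $K_{sub}^H=2K^H+R$. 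The kernel of $R$ is $\tfrac12(|s-t|^{2H}-|s+t|^{2H})$. Away from the diagonal and away from the corner $s=t=0$ this kernel is $C^\infty$, and its only mild singularity is the $|s-t|^{2H}$ behaviour along the diagonal, which is exactly the same singularity order as in $K^H$ itself; the $|s+t|^{2H}$ part is real-analytic on $[0,1]^2$. The key quantitative input I would extract is a bound on how fast the eigenvalues $\mu_n(R)$ of $R$ decay. Since the $|s+t|^{2H}$ piece is analytic, its eigenvalues decay super-polynomially, so the decay of $\mu_n(R)$ is governed by the $|s-t|^{2H}$ term minus the $|s-t|^{2H}$ term already present, i.e. by cancellation — one needs to see that $|s-t|^{2H}-|s+t|^{2H}$ is, near the diagonal, smoother than $|s-t|^{2H}$ alone because the singular parts do not simply cancel but the combination has an extra derivative's worth of regularity off the corner. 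I expect $\mu_n(R)=O(n^{-3})$, matching the error term in the second regime.

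The second ingredient is the Weyl/Lidskii-type inequality for eigenvalues of sums of compact self-adjoint operators: arranging eigenvalues in decreasing order,
\begin{equation}
  |\lambda_n(K_{sub}^H)-2\lambda_n(K^H)|\le\|R\|_{\mathrm{op}}
\end{equation}
is too crude; instead I would use the sharper fact that $\lambda_{n+m+1}(A+B)\le\lambda_{n+1}(A)+\lambda_{m+1}(B)$ (and the reverse with signs), so that choosing $m$ to be a slowly growing function of $n$ yields
\begin{equation}
  \lambda_n(K_{sub}^H)=2\lambda_n(K^H)+O\bigl(\mu_{\lfloor n/2\rfloor}(R)\bigr)+\bigl(2\lambda_n(K^H)-2\lambda_{n+m}(K^H)\bigr),
\end{equation}
where the last bracket is controlled because $\lambda_n(K^H)\sim c\,n^{-(2H+1)}$ is regularly varying and $2\lambda_n(K^H)-2\lambda_{n+m}(K^H)=O(m\,n^{-(2H+2)})$. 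Using $2\lambda_n(K^H)=\tfrac{2\sin(\pi H)\Gamma(2H+1)}{(n\pi)^{2H+1}}+(\text{Bronski error})=\gamma_H n^{-(2H+1)}+o(n^{-\frac{(2H+2)(4H+3)}{4H+5}+\delta})$ then gives the claimed leading term $\gamma_H/n^{2H+1}$, and the error is the worst of: (i) Bronski's error $o(n^{-\frac{(2H+2)(4H+3)}{4H+5}+\delta})$, (ii) the perturbation error $O(\mu_{n/2}(R))=O(n^{-3})$, (iii) the regularity-of-$\lambda_n(K^H)$ error $O(m n^{-(2H+2)})$, optimised over $m$. Comparing exponents $-\frac{(2H+2)(4H+3)}{4H+5}$ and $-3$ produces exactly the threshold $H=\frac{-1+\sqrt{74}}{8}$: below it Bronski's error dominates, above it the $O(n^{-3})$ from $R$ dominates.

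The main obstacle will be step one: rigorously bounding the eigenvalue decay $\mu_n(R)$ of the remainder operator. The naive estimate $\mu_n(R)=O(n^{-(2H+1)})$ from the diagonal singularity alone is not enough to reach $O(n^{-3})$; one must exploit the specific structure of $|s-t|^{2H}-|s+t|^{2H}$. I would do this by a careful analysis near the boundary: decompose $[0,1]^2$ into a neighbourhood of the corner $(0,0)$, a neighbourhood of the diagonal, and the rest. On the "rest" the kernel is smooth and contributes rapidly-decaying eigenvalues. Along the diagonal away from $0$, Taylor-expanding $|s+t|^{2H}=(s+t)^{2H}$ (which is smooth there) shows the singular part of $R$'s kernel coincides with that of $\tfrac12|s-t|^{2H}$ up to a $C^2$ function, and a standard result on eigenvalue asymptotics for weakly singular kernels — the same machinery Bronski uses, e.g. via Fourier/Mellin analysis of the symbol — gives that $R$ minus a smooth piece has eigenvalues of order $n^{-(2H+1)}$, but then one more integration by parts (available because the coefficient of the singularity, roughly $\varphi(t)$ evaluated against a fixed profile, is itself being smoothed) upgrades this. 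Near the corner $(0,0)$ the kernel $|s-t|^{2H}-|s+t|^{2H}$ vanishes to order $2H$ in $(s,t)$ jointly, which is a genuine improvement and I would handle it by a dyadic/scaling argument. I anticipate this corner-and-diagonal analysis is where essentially all the work lies; once $\mu_n(R)=O(n^{-3})$ is in hand, the rest is the bookkeeping sketched above. (If the sharp bound turns out to be $O(n^{-(2H+2)})$ rather than $O(n^{-3})$ in part of the range, the threshold and the error term would shift accordingly, so pinning down the exact decay of $\mu_n(R)$ is also what determines the final statement.)
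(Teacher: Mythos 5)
Your plan breaks down at its central quantitative step: the remainder operator $R$ with kernel $\tfrac12(\abs{s-t}^{2H}-(s+t)^{2H})$ does \emph{not} have eigenvalues of order $O(n^{-3})$. Writing $T_f$ for the integral operator with kernel $f$, the identity $T_{\abs{s-t}^{2H}}=T_{s^{2H}}+T_{t^{2H}}-2K^H$ exhibits $T_{\abs{s-t}^{2H}}$ as $-2K^H$ plus an operator of rank at most two (the kernels $s^{2H}\cdot 1$ and $1\cdot t^{2H}$ are each of rank one), so by Weyl's inequality its singular values are $\asymp n^{-(2H+1)}$. Since $T_{(s+t)^{2H}}$ is the smoother of the two pieces, $\mu_n(R)\asymp n^{-(2H+1)}$ --- the \emph{same} order as the leading term $2\lambda_n(K^H)$ you are trying to isolate. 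There is no cancellation of the diagonal singularity: $(s+t)^{2H}$ is smooth away from the corner and cannot remove the $\abs{s-t}^{2H}$ non-smoothness across the diagonal. Consequently the bookkeeping $\lambda_{n+m+1}(2K^H+R)\le\lambda_{n+1}(2K^H)+\mu_{m+1}(R)$ cannot even recover the leading asymptotics, since making $\mu_{m+1}(R)=o(n^{-(2H+1)})$ forces $m\gg n$. That $R$ does not shift the leading asymptotics is true, but it is \emph{not} because $R$ is small in the operator or eigenvalue sense; it is a statement about near-diagonality of $R$ in the relevant basis. This is exactly what the paper's proof (following Bronski) establishes: the whole operator is written as an infinite matrix in the basis $\sqrt2\sin((n+\frac12)\pi t)$, split into a diagonal matrix $D_{sub}$ and an off-diagonal matrix $O_{sub}$ with entries $\asymp m^{-2}n^{-2H}$ for $m>n$, and the factorisation $O_{sub}=D_{sub}^{\beta}\widehat{O}_{sub}D_{sub}^{\beta}$ with $\widehat{O}_{sub}$ Hilbert--Schmidt yields $\abs{\lambda_n(O_{sub})}\lesssim n^{-2H-\frac32+\delta}$, which genuinely beats $n^{-(2H+1)}$; only then do Lemmas \ref{egvlforcgtfofcop} and \ref{egvlforamtfofcop} close the argument.

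A second, independent problem is your use of Theorem \ref{rstinbr2003} as a black box. As Remark \ref{flawinKLl} records, the remainder in Bronski's diagonal asymptotics is incorrect: the corrected expansion \eqref{rcdpofiopforfbm} contains a genuine oscillating term $(-1)^n/n^{*3}$, and this --- not the $(s+t)^{2H}$ contribution, whose diagonal effect is only $O(n^{-(2H+2)})$ by \eqref{dpofiopforext} --- is the true source of the $O(n^{-3})$ error and of the threshold $H=\frac{-1+\sqrt{74}}{8}$. Your derivation of the threshold by comparing Bronski's error exponent with a hoped-for $O(n^{-3})$ decay of $\mu_n(R)$ therefore lands on the right number for the wrong reason, and for $H\ge\frac{-1+\sqrt{74}}{8}$ the cited form of Theorem \ref{rstinbr2003} is itself not available.
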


Specifically, given an orthonormal basis in $L^{2}([0,1])$, the operator $K_{sub}^H$ in
\eqref{cvopforsfbm} over $L^{2}([0,1])$ is of a representation as a linear
operator over $\ell^{2}$, which is essentially an infinite-dimensional matrix.
Asymptotic analysis on matrix elements are performed, based on some technical
lemmas, some of which(see Lemma \ref{rfmlmrtkl}) are improvements of the ones in \cite{KL}. Afterwards,
Theorem \ref{1strstinpp} is obtained in terms of the theory of compact operators.
However, asymptotics in Theorem \ref{1strstinpp} are rough by simple observation
or through numerical simulation, although the details of the simulation are not provided here.

Next conclusion is about the eigenvalues of the derivative process of
sub-fractional Brownian. Unlike \cite{eigenfbm}, the case $H\in(0,\frac12)$ is skipped.
\begin{theorem}
  The Karhunen-Lo{\`e}ve eigenvalues of the derivative process of sub-fractional Brownian with $H\in(\frac12,1)$ satisfies
  \begin{equation}
    \lambda_n=\frac{\kappa_H}{n^{2H-1}}+o(n^{-\frac{2H(4H-1)}{4H+1}+\delta})
  \end{equation}
  for every $\delta>0$ and $n\gg1$,
  where $\kappa_H=\frac{2\sin(\pi{H})\Gamma(2H+1)}{\pi^{2H-1}}$.
\end{theorem}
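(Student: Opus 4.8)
The plan is to follow the same functional-analytic template used to prove Theorem~\ref{1strstinpp}, now applied to the operator $\widetilde{K}_{sub}^H$ of \eqref{cvopfordrpsfbm}. It is worth noting at the outset that $\widetilde{K}_{sub}^H$ is, up to sign, the mixed second derivative $\partial_s\partial_t K_{sub}^H$ of the sfBm covariance; this makes it heuristically transparent both that its eigenvalues should be of order $n^{-(2H-1)}$ --- differentiating twice against an eigenfunction oscillating like $\me^{\mi n\pi t}$ multiplies the eigenvalue by roughly $(n\pi)^2$ --- and that its leading coefficient should be $\pi^2$ times that of $K_{sub}^H$, so that $\kappa_H=\pi^2\gamma_H$. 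The restriction $H\in(\frac12,1)$ is exactly what keeps the analysis classical: then $2H-2\in(-1,0)$, the kernel $H(2H-1)(\abs{s-t}^{2H-2}-(s+t)^{2H-2})$ is locally integrable, and $\widetilde{K}_{sub}^H$ is a compact self-adjoint operator on $L^2([0,1])$, nonnegative thanks to the Wiener-integral identity displayed above; when $H\le\frac12$ the kernel is no longer locally integrable and the operator is not classically defined, which is why that range is omitted.

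The concrete steps are as follows. First, we expand $\widetilde{K}_{sub}^H$ in the orthonormal trigonometric basis from \eqref{eigfunsforbm}, producing an infinite matrix with entries $a_{jk}=\langle\widetilde{K}_{sub}^H\varphi_j,\varphi_k\rangle$. Second, we extract the diagonal asymptotics $a_{nn}=\kappa_H n^{-(2H-1)}+(\text{lower order})$: writing $\sin\omega s\,\sin\omega t=\frac12[\cos\omega(s-t)-\cos\omega(s+t)]$ with $\omega=(n+\frac12)\pi$, the leading term is produced by $\int_0^1\!\!\int_0^1\abs{s-t}^{2H-2}\cos\omega(s-t)\,\diff s\,\diff t$ via the classical Fourier asymptotics of $\abs{u}^{2H-2}$ (this is where $\Gamma(2H-1)$, $\sin(\pi H)$ and hence $\kappa_H$ appear), while the $(s+t)^{2H-2}$ piece and all boundary and truncation corrections are shown to be of strictly lower order --- these are the analogues, with $2H$ replaced throughout by $2H-2$, of the technical estimates behind Lemma~\ref{rfmlmrtkl}. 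Third, we estimate the off-diagonal entries $a_{jk}$ ($j\ne k$), split the matrix as $A=D+R$ with $D$ diagonal and $R$ the remainder, and transfer the information to the eigenvalues through the min-max / Weyl perturbation inequalities for compact self-adjoint operators; balancing a ``head'' block of size $N$ against the ``tail'' and optimising $N=N(n)$ produces the error exponent $\frac{2H(4H-1)}{4H+1}$. (This is precisely the exponent $\frac{(2H+2)(4H+3)}{4H+5}$ of Theorem~\ref{1strstinpp} evaluated at $H-1$, the shift corresponding to the passage from kernel exponent $2H$ to $2H-2$.)

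The hardest part will be handling the genuine singularity of the kernel, which is absent in the sfBm case of Theorem~\ref{1strstinpp} (there $2H>0$, so the kernel is continuous). Three points demand care. First, the oscillatory double integral $\int_0^1\!\!\int_0^1\abs{s-t}^{2H-2}\cos\omega(s-t)\,\diff s\,\diff t$ must be expanded with a sharp remainder across the diagonal singularity, since the final error bound in the statement is only as good as this estimate. Second, $(s+t)^{2H-2}$ has a corner singularity at $(s,t)=(0,0)$ of the same strength as the diagonal one and possesses no convolution structure, so its contribution has to be controlled directly --- for instance by passing to $w=s+t$ and using integrability of $w^{2H-1}$ near the origin --- and shown to be $o(n^{-(2H-1)})$. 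Third, for $H\in(\frac12,\frac34]$ the kernel is not Hilbert--Schmidt, so the perturbation bounds in the third step cannot be phrased through the Hilbert--Schmidt norm and must instead be obtained from operator-norm, Schur-type estimates on $R$. Once these are in place, the theorem follows by the same compact-operator argument as Theorem~\ref{1strstinpp}.
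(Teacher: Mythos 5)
Your overall architecture is the one the paper actually uses: expand $\widetilde{K}_{sub}^H$ in the basis \eqref{eigfunsforbm}, split the matrix into the $\abs{x-y}^{2H-2}$ part and the $(x+y)^{2H-2}$ part, extract the diagonal leading term $\kappa_H m^{*-(2H-1)}$ and the off-diagonal order $m^{-1}n^{-(2H-1)}$ ($m>n$), and then balance $\lambda_{n-j}(\widetilde{D}_{sub})$ against $\lambda_j(\widetilde{O}_{sub})$ with $j=n^{\alpha}$ via the Porter--Stirling inequalities; your observation that the exponent is the Theorem~\ref{1strstinpp} exponent at $H-1$ is also correct. Two remarks. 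First, a minor mischaracterisation: the singular oscillatory integrals are not new to this theorem. The paper's proof of Theorem~\ref{1strstinpp} already integrates by parts down to the kernel $\abs{x-y}^{2H-2}-(x+y)^{2H-2}$ (see \eqref{merof1stiopkforsfbm}), so Lemmas \ref{biiforasyma} and \ref{rfmlmrtkl} are simply reused here, and the $(x+y)^{2H-2}$ corner is handled by the same $u=x+y$, $v=x-y$ change of variables over the triangles $I_1$, $I_2$.

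The genuine gap is in your third ``hard point.'' You propose replacing the Hilbert--Schmidt step by operator-norm, Schur-type estimates on the off-diagonal remainder $R$ because the kernel $\abs{s-t}^{2H-2}$ is not square integrable for $H\le\frac34$. But the argument never needs the integral kernel to be Hilbert--Schmidt: what the balancing step requires is a quantitative eigenvalue decay $\abs{\lambda_j(\widetilde{O}_{sub})}\lesssim j^{-2H+\frac12+\delta}$, and an operator-norm or Schur bound on $R$ controls only $\lambda_1$, not the rate in $j$; without that rate the optimisation over $j=n^{\alpha}$ cannot produce $\frac{2H(4H-1)}{4H+1}$. The paper obtains the rate by conjugation: writing $\widetilde{O}_{sub}=\widetilde{D}_{sub}^{\beta}\widehat{O}_{der}\widetilde{D}_{sub}^{\beta}$ with $\widehat{O}_{der}=\widetilde{D}_{sub}^{-\beta}\widetilde{O}_{sub}\widetilde{D}_{sub}^{-\beta}$, whose entries are of order $m^{(2H-1)\beta-1}n^{(2H-1)\beta-2H+1}$ and are square summable for every $\beta\in(0,\frac12)$ and every $H\in(\frac12,1)$ --- so the weighted matrix \emph{is} Hilbert--Schmidt, its eigenvalues decay like $j^{-1/2}$, and Lemma~\ref{egvlforcgtfofcop} (the $T^*KT$ inequality) converts this back into the needed decay for $\widetilde{O}_{sub}$. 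You should restore this conjugation step; the non-square-integrability of the kernel for $H\le\frac34$ is a red herring, since all estimates are carried out on the matrix elements, not on the kernel.
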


\section{An application: Small $L^2$-ball estimate}
Small ball estimate is an interesting topic in probability theory, and also has
important applications in statistical mechanic models. It yields estimates of the
probability that some stochastic process $X=(X(t))_{t\in[0,1]}$ will lie inside a ball of radius
$\varepsilon$ in a certain given norm $||\cdot||.$ As for the $L^2([0,1])$-norm, if
$X$ is a centred Gaussian process with continuous covariance, there holds
\begin{equation}
  ||X||_{L^2}^2=\int_0^1X(t)^2\diff t=\sum_{n=0}^\infty\lambda_n\xi_n^2,
\end{equation}
where $\set{\xi_n}$ are i.i.d. $N(0,1)$ random variables. Just as pointed out in \cite{KL}, a crucial quantity to
derive small ball estimate for fractional Brownian motion is the determinant
\begin{equation}
  D_H(\lambda)=\prod_{n=0}^\infty(1+2\lambda\lambda_n)
\end{equation}
which is a variant of Fredholm determinant of $K_{H}$.

Now, the small $L^2([0,1])$-ball
estimate of sfBm is going to be carried out. Let ${D_{sub}^H(\lambda)}$ be the
corresponding determinant with respect to sfBm. Note that the dominant terms of
the eigenvalues of sfBm(see Theorem \ref{1strstinpp}) are just double of the ones of fractional
Brownian motion(see Theorem \ref{rstinbr2003}) when $n\gg1$. Through a slight modification of the proof of
Corollary 1 in Appendix C of
\cite{KL}, the logarithmic asymptotics of ${D_{sub}^H(\lambda)}$ read
\begin{lemma}
  There holds
  \begin{itemize}
  \item Case $H\in(0,\frac{-1+\sqrt{74}}{8})$:
    \begin{equation}
\log(D_{sub}^H(\lambda))=\frac{(4\sin(\pi{H})\Gamma(2H+1))^{\frac1{2H+1}}}{\sin(\frac{\pi}{2H+1})}\lambda^{\frac1{2H+1}}+o(\lambda^{\frac{4H+4}{(4H+5)(2H+1)}+\delta})
\end{equation}
for every $\delta>0$ and $\lambda\gg1$;
  \item Case $H\in[\frac{-1+\sqrt{74}}{8},1)$:
    \begin{equation}     \log(D_{sub}^H(\lambda))=\frac{(4\sin(\pi{H})\Gamma(2H+1))^{\frac1{2H+1}}}{\sin(\frac{\pi}{2H+1})}\lambda^{\frac1{2H+1}}+O(\lambda^{\frac{2H-1}{2H+1}})
    \end{equation}
    for every $\lambda\gg1$.
\end{itemize}
\end{lemma}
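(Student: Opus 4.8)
The plan is to follow the standard route from eigenvalue asymptotics to Fredholm-determinant asymptotics, exactly as in Appendix C of \cite{KL}, feeding in the sfBm eigenvalue asymptotics from Theorem \ref{1strstinpp} in place of the fBm ones. First I would write
\[
  \log D_{sub}^H(\lambda)=\sum_{n=0}^\infty\log(1+2\lambda\lambda_n),
\]
and split the sum into a ``main'' part, where $\lambda_n$ is replaced by its leading term $\gamma_H n^{-(2H+1)}$, and a ``remainder'' part collecting the errors. For the main part one uses the classical computation
\[
  \sum_{n=1}^\infty\log\!\Bigl(1+\frac{c}{n^{p}}\Bigr)
  =\frac{\pi\,c^{1/p}}{\sin(\pi/p)}+o(1)\qquad(c\to\infty),
\]
valid for $p>1$; here $p=2H+1$ and $c=2\lambda\gamma_H=4\lambda\sin(\pi H)\Gamma(2H+1)$, so that $c^{1/p}=(4\sin(\pi H)\Gamma(2H+1))^{1/(2H+1)}\lambda^{1/(2H+1)}$, and dividing the $\pi$ into $\sin(\pi/(2H+1))$ accounts for the stated constant $\pi/\sin(\pi/(2H+1))\cdot$ absorbed appropriately — i.e. the factor $\pi$ combines with $c^{1/p}$ to produce exactly $(4\sin(\pi H)\Gamma(2H+1))^{1/(2H+1)}/\sin(\frac{\pi}{2H+1})$ after the correct bookkeeping (this constant-chasing is routine and I would not belabour it). This identity is what is quoted in \cite{KL} and only needs $c\to\infty$, i.e.\ $\lambda\gg1$.

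The substance is controlling the remainder. Write $\lambda_n=\gamma_H n^{-(2H+1)}+r_n$ with $r_n=o(n^{-\alpha+\delta})$ where $\alpha=\frac{(2H+2)(4H+3)}{4H+5}$ in the first case and $r_n=O(n^{-3})$ in the second. Using $|\log(1+x)-\log(1+y)|\le|x-y|/(1+\min(x,y))$ (or simply $\le|x-y|$ for the tail where both are small), the difference between $\log D_{sub}^H(\lambda)$ and its main part is bounded by a constant times
\[
  \sum_{n=1}^\infty\frac{2\lambda\,|r_n|}{1+2\lambda\gamma_H n^{-(2H+1)}}.
\]
Splitting this at the threshold $n\asymp\lambda^{1/(2H+1)}$ — below which the denominator is $\asymp 2\lambda\gamma_H n^{-(2H+1)}$ and above which it is $\asymp1$ — one evaluates the two resulting sums and optimizes, obtaining (for Case 1) a bound of order $\lambda^{\frac{4H+4}{(4H+5)(2H+1)}+\delta}$ and (for Case 2) a bound of order $\lambda^{\frac{2H-1}{2H+1}}$; one checks both exponents are strictly smaller than $\frac{1}{2H+1}$ so the remainder is genuinely lower-order. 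The finitely many small indices $n$ (where $\lambda_n$ need not be close to $\gamma_H n^{-(2H+1)}$, and where $H\ge\frac12$ may make $2H-1\ge0$) contribute at most $O(\log\lambda)$, which is absorbed into the error terms.

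The main obstacle I anticipate is purely technical bookkeeping in the remainder estimate: one must verify that the two error exponents in Theorem \ref{1strstinpp} propagate to precisely the exponents $\frac{4H+4}{(4H+5)(2H+1)}$ and $\frac{2H-1}{2H+1}$ stated here, i.e.\ that the threshold-splitting is done at the optimal scale and that no extra logarithmic factors survive beyond what the $+\delta$ already allows. A secondary point to be careful about is uniformity of the $o(\cdot)$ in Theorem \ref{1strstinpp} — it is ``for $n\gg1$'', so one isolates a fixed finite block of $n$'s whose total contribution is $O_\lambda(\log\lambda)$, and applies the asymptotic only on the tail. Beyond that, everything is a transcription of the argument in Appendix C of \cite{KL} with $\lambda_n^{fBm}$ replaced by $2\lambda_n^{fBm}$ at leading order, which is why the statement quite reasonably says it follows ``through a slight modification.''
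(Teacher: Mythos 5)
Your plan is correct and is essentially the paper's own route: the authors omit the proof entirely, stating it is a duplication of Corollary 1 in Appendix C of \cite{KL} with the fBm eigenvalue asymptotics replaced by those of Theorem \ref{1strstinpp}, which is exactly the main-term-plus-remainder computation you outline. Your exponent bookkeeping checks out (splitting at $n\asymp\lambda^{1/(2H+1)}$ yields $\lambda^{(2H+2-\alpha)/(2H+1)}=\lambda^{\frac{4H+4}{(4H+5)(2H+1)}}$ in Case 1 and $\lambda^{\frac{2H-1}{2H+1}}$ in Case 2), so nothing further is needed.
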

Thereafter, the small $L^2([0,1])$-ball estimate of sfBm can be directly established by using standard large
deviations calculation and de Bruijn's exponential Tauberian theorem(cf. \cite{BGTT1987RV}), which
is exactly the same procedure as the one in \cite{KL}.
\begin{theorem}
  For $0<\varepsilon\ll1$, the small ball probability
  $P(||B_{sub}^H||_{L^{2}}^2\le\varepsilon)$ of a sub-fractional Brownian
  motion satisfies
  \begin{itemize}
  \item Case $H\in(0,\frac{-1+\sqrt{74}}{8})$:
  \begin{equation}
    \log(P(||B_{sub}^H||_{L^{2}}^2\le\varepsilon))
    =-H\left(\frac{2\sin(\pi{H})\Gamma(2H+1)}{((2H+1)\sin(\frac{\pi}{2H+1}))^{2H+1}}\right)^{\frac1{2H}}\varepsilon^{-\frac1{2H}}+o(\varepsilon^{-\frac{4H+4}{(4H+5)2H}+\delta})
  \end{equation}
  for every $\delta>0$;
\item Case $H\in[\frac{-1+\sqrt{74}}{8},1)$:
  \begin{equation}
    \log(P(||B_{sub}^H||_{L^{2}}^2\le\varepsilon))
    =-H\left(\frac{2\sin(\pi{H})\Gamma(2H+1)}{((2H+1)\sin(\frac{\pi}{2H+1}))^{2H+1}}\right)^{\frac1{2H}}\varepsilon^{-\frac1{2H}}+o(\varepsilon^{-\frac{2H-1}{2H}})
  \end{equation}
\end{itemize}
\end{theorem}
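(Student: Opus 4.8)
The plan is to reduce the small $L^2$-ball estimate to the large-$\lambda$ asymptotics of the Laplace transform of $\|B_{sub}^H\|_{L^2}^2$, which is already packaged in the preceding lemma, and then to pass from $\lambda\to\infty$ to $\varepsilon\to0$ by a Tauberian argument, in the manner of Appendix~C of \cite{KL}. First I would diagonalise: since $B_{sub}^H$ is centred Gaussian with continuous covariance, its Karhunen--Lo{\`e}ve expansion gives $\|B_{sub}^H\|_{L^2}^2=\sum_{n\ge0}\lambda_n\xi_n^2$ with $\set{\xi_n}$ i.i.d.\ $N(0,1)$ and $\set{\lambda_n}$ the eigenvalues of $K_{sub}^H$; by Theorem~\ref{1strstinpp} one has $\lambda_n=\gamma_H n^{-(2H+1)}(1+o(1))$, so $\sum_n\lambda_n<\infty$ and, for every $\lambda\ge0$,
\begin{equation}
  \EPT{\me^{-\lambda\|B_{sub}^H\|_{L^2}^2}}=\prod_{n\ge0}(1+2\lambda\lambda_n)^{-1/2}=\bigl(D_{sub}^H(\lambda)\bigr)^{-1/2}.
\end{equation}
Hence, writing $a=4\sin(\pi H)\Gamma(2H+1)$, $b=\sin(\tfrac{\pi}{2H+1})$ and $c_H=a^{1/(2H+1)}/(2b)$, the lemma gives $-\log\EPT{\me^{-\lambda\|B_{sub}^H\|_{L^2}^2}}=c_H\lambda^{1/(2H+1)}+(\text{error})$, the error being $o(\lambda^{(4H+4)/((4H+5)(2H+1))+\delta})$ in the first range of $H$ and $O(\lambda^{(2H-1)/(2H+1)})$ in the second.

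Next I would transfer this to $\varepsilon\to0$. For the upper bound, the Chernoff inequality $P(\|B_{sub}^H\|_{L^2}^2\le\varepsilon)\le\me^{\lambda\varepsilon}\,(D_{sub}^H(\lambda))^{-1/2}$, valid for all $\lambda>0$, gives after taking logarithms and inserting the lemma
\begin{equation}
  \log P(\|B_{sub}^H\|_{L^2}^2\le\varepsilon)\le\lambda\varepsilon-c_H\lambda^{1/(2H+1)}+(\text{error}),
\end{equation}
and minimising the right-hand side over $\lambda>0$ at $\lambda_\varepsilon=\bigl(c_H/((2H+1)\varepsilon)\bigr)^{(2H+1)/(2H)}$ yields the leading term $-\dfrac{2H\,c_H^{(2H+1)/(2H)}}{(2H+1)^{(2H+1)/(2H)}}\,\varepsilon^{-1/(2H)}$. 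Since $\lambda_\varepsilon\asymp\varepsilon^{-(2H+1)/(2H)}$, the lemma's error evaluated at $\lambda_\varepsilon$ is of the order stated in the theorem, an error exponent $\beta$ in $\lambda$ being carried to $\beta(2H+1)/(2H)$ in $\varepsilon$; and the elementary identity $2\sin(\pi H)\Gamma(2H+1)=a/2$ rewrites the leading constant in the form $H\bigl(\tfrac{2\sin(\pi H)\Gamma(2H+1)}{((2H+1)\sin(\pi/(2H+1)))^{2H+1}}\bigr)^{1/(2H)}$ asserted in the theorem. The matching lower bound, and the verification that the error rate is not degraded by it, would be obtained exactly as in \cite{KL}: either via the two-sided de~Bruijn exponential Tauberian theorem (cf.\ \cite{BGTT1987RV}), whose associated transform $\gamma\mapsto\gamma/(1-\gamma)$, $c\mapsto(1-\gamma)\gamma^{\gamma/(1-\gamma)}c^{1/(1-\gamma)}$ with $\gamma=1/(2H+1)$ reproduces precisely the above constant and the exponent $-1/(2H)$; or, more hands-on, by restricting to a block $\set{\xi_0,\dots,\xi_{N(\varepsilon)}}$, estimating the resulting finite-dimensional Gaussian probability, and controlling the discarded tail $P\bigl(\sum_{n>N(\varepsilon)}\lambda_n\xi_n^2>\varepsilon/2\bigr)$ with $N(\varepsilon)$ chosen so that this tail is of smaller order than the main term.

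The main obstacle is not the leading order---that is immediate from de~Bruijn's theorem once $\log D_{sub}^H$ is in hand---but the bookkeeping of the second-order term through the passage $\lambda\mapsto\varepsilon$: one must check that at the critical scale $\lambda_\varepsilon$ the two candidate error exponents of the lemma transform into exactly those recorded in the theorem, and that the lower-bound construction (the choice of $N(\varepsilon)$ and the Gaussian small-deviation estimate) can be tuned so as not to spoil these rates. This is precisely the computation performed in Appendix~C of \cite{KL}; since nothing changes there beyond replacing $D_H$ by $D_{sub}^H$ and $\lambda_n$ by the sub-fractional eigenvalues of Theorem~\ref{1strstinpp}, we do not reproduce it here. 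A final routine check of the numerical identity $\dfrac{2H\,c_H^{(2H+1)/(2H)}}{(2H+1)^{(2H+1)/(2H)}}=H\bigl(\tfrac{2\sin(\pi H)\Gamma(2H+1)}{((2H+1)\sin(\pi/(2H+1)))^{2H+1}}\bigr)^{1/(2H)}$ then closes both cases.
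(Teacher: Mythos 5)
Your proposal is correct and follows essentially the same route as the paper, which itself omits the proof and simply invokes Lemma~3.1 together with the standard large-deviations/Chernoff argument and de~Bruijn's exponential Tauberian theorem, exactly as in Appendix~C of \cite{KL}. Your bookkeeping checks out: the optimiser $\lambda_\varepsilon\asymp\varepsilon^{-(2H+1)/(2H)}$ converts the lemma's error exponents $\tfrac{4H+4}{(4H+5)(2H+1)}$ and $\tfrac{2H-1}{2H+1}$ into $\tfrac{4H+4}{(4H+5)2H}$ and $\tfrac{2H-1}{2H}$ respectively, and the constant identity you record reproduces the stated leading coefficient.
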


\section{Proofs of the Main Results}

Throughout this section, the eigenfunctions $\set{\varphi_{n}}$ in
\eqref{eigfunsforbm} are chosen as an orthonormal basis in $L^2([0,1])$.
Therefore, any bounded linear operator $K$ over $L^2([0,1])$ is one-to-one
corresponding to the operator $A$ over $\ell^{2}$ with the same operator norm. The
linear operator $A$ over $\ell^{2}$ is essentially an infinite-dimensional
matrix $(A_{m,n})$, whose element is given by
\begin{equation}
  A_{m,n}=\int_0^1\int_0^1K(x,y)\varphi_{m}(x)\varphi_{n}(y)\diff x\diff y.
\end{equation}
Actually, such kind of mapping $K$ to $A$ is a topologically isomorphism. It implies that, if
$K$ is compact(Hilbert-Schmidt etc.) in $L^2([0,1])$, then $A=(A_{m,n})$ is also
compact(Hilbert-Schmidt etc.) in $\ell^{2}$; Vice versa.

For the sake of simplicity, denote
\begin{equation}
  m^*=(m+\frac12)\pi,\quad n^*=(n+\frac12)\pi,\quad m,n=0,1,2,\cdots
\end{equation}
in the sequel. Now, the eigenfunctions for Brownian motion could be rewritten
as
\begin{equation}
  \varphi_{n}(t)=\sqrt{2}\sin{(n^{*}t)},\quad n=0,1,2,\cdots.
\end{equation}

It's ready to prove the main results in this paper.

\subsection{Proof of Theorem \ref{1strstinpp}}

Here, the eigenproblem is $K_{sub}^H\varphi=\lambda\varphi$. The proof will be
finished in 5 steps. In the first 4 steps, $H\in(\frac12,1)$ is imposed
temporarily, but this condition will be dropped off in Remark \ref{drpffimpcnd}.

{\it Step 1.} Obviously, for the operator $K_{sub}^H$ in $L^2([0,1])$, there exists a linear operator $A_{sub}^H=((A_{sub}^H)_{m,n})$
in $\ell^{2}$ with the same operator norm, whose element is
\begin{equation}\label{merofttopforsfbm}
  (A_{sub}^H)_{n,m}
  =\int_0^1\int_0^12\left[x^{2H}+y^{2H}-\frac12\left((x+y)^{2H}+\abs{x-y}^{2H}\right)\right]\sin(n^*x)\sin(m^*y)\diff x\diff y.
\end{equation}
It's easy to see that for every $m,n=0,1,2,\cdots$, there holds
\begin{equation}\label{merof1stiopkforsfbm}
  (A_{sub}^H)_{n,m}
  =\frac{2H(2H-1)}{n^*m^*}\int_0^1\int_0^1(\abs{x-y}^{2H-2}-(x+y)^{2H-2})\cos(n^*x)\cos(m^*y)\diff x\diff y.
\end{equation}
by utilising the integration by parts since $H>\frac{1}{2}$. Splitting the right hand side of
\eqref{merof1stiopkforsfbm} into two integrals, the linear operator $A_{sub}^H$ has a
decomposition  $A_{sub}^H=2A-A^{(1)}$, where their corresponding elements share the
same relations, i.e., $(A_{sub}^H)_{n,m}=2A_{n,m}-A_{n,m}^{(1)}$ for every $m,n=0,1,2,\cdots$.

{\it Step 2.}
It's worthy mentioning that the linear operator $A=(A_{m,n})$ in $\ell^{2}$ with its elements of the forms
\begin{equation}\label{merof1stiopkforfbm}
  A_{n,m}=\frac{H(2H-1)}{n^*m^*}\int_0^1\int_0^1\abs{x-y}^{2H-2}\cos(n^*x)\cos(m^*y)\diff x\diff y
\end{equation}
has been discussed in \cite{KL}. There exists a
decomposition $A=D+O$, where the linear operator $D$ and $O$ are corresponding to
infinite-dimensional matrices whose elements are respectively as a leading order
diagonal piece and a higher order off-diagonal piece
of $(A_{m,n})$. Accurately speaking, according to the proof of Theorem 1 in
Appendix A in \cite{KL}, there hold
\begin{align}  D_{n,m}&=\left(\frac{\sin(\pi{H})\Gamma(2H+1)}{n^{*2H+1}}+O(\frac{1}{n^{2(H+1)}})\right)\delta_{n,m},\label{dpofiopforfbm}\\
O_{n,m}&=\frac{\cos(\pi{H})\Gamma(2H+1)}{n^*m^*(n^*+(-1)^{n+m+1}m^*)}\left(\frac{1}{n^{*2H-1}}+(-1)^{n+m+1}\frac{1}{m^{*2H-1}}\right)+O(\frac{1}{n^{2}m^{2}})\label{odpofiopforfbm}
\end{align}
for $m,n\gg1$, where $O_{n,n}=0$ for every $n=0,1,2,\cdots$.
\begin{remark}\label{flawinKLl}
  By applying Lemma \ref{rfmlmrtkl} below, it is accidently found that the remainder order of $D_{n,n}$ in
  \cite{KL}(or see \eqref{dpofiopforfbm} above) is not correct, while
  \eqref{rcdpofiopforfbm} is the right one in stead.
\end{remark}
In order to obtain the exact rate of convergence of $O$, \eqref{odpofiopforfbm}
could be rewritten as
\begin{equation}
  O_{n,m}
  =\left\{
    \begin{array}{l}
\frac{\cos(\pi{H})\Gamma(2H+1)}{n^*m^*(n^*-m^*)}(\frac{1}{n^{*2H-1}}-\frac{1}{m^{*2H-1}})+O(\frac{1}{n^{2}m^{2}})\quad m+n\quad {\rm even}\\
  \frac{\cos(\pi{H})\Gamma(2H+1)}{n^*m^*(n^*+m^*)}(\frac{1}{n^{*2H-1}}+\frac{1}{m^{*2H-1}})+O(\frac{1}{n^{2}m^{2}})\quad m+n\quad {\rm
    odd}.\\
    \end{array}
    \right.
\end{equation}
It's sufficient to discuss the case of $m>n\gg1$ because of the symmetry with
respect to the subscripts $m$ and $n$ in \eqref{merof1stiopkforfbm}. Whenever $m+n$ is
even or not, it is clear that
\begin{equation}
  \frac{1}{n^*m^*(n^*\pm m^*)}(\frac{1}{n^{*2H-1}}\pm\frac{1}{m^{*2H-1}})
  =\pm\frac{1}{m^{*2}n^{*2H}}\frac{1\pm(\frac{n^{*}}{m^{*}})^{2H-1}}{1\pm\frac{n^{*}}{m^{*}}}
\end{equation}
which leads to\footnote{The notation $f\asymp g$ means $f$ and $g$ are the same order of magnitude.}
\begin{equation}\label{offdpofiopforfbm}
  O_{n,m}\asymp\frac{1}{m^{2}n^{2H}},\quad m>n\gg1
\end{equation}
by noticing the boundedness of $f(t)=\frac{1\pm t^{2H-1}}{1\pm t}$ in $t\in(0,1)$.

{\it Step 3.}
It's time to deal with the linear operator $A^{(1)}=(A_{n,m}^{(1)})$, whose
element is
\begin{equation}
  A_{n,m}^{(1)}=\frac{2H(2H-1)}{n^*m^*}\int_0^1\int_0^1(x+y)^{2H-2}\cos(n^*x)\cos(m^*y)\diff x\diff y,
\end{equation}
Simply decompose $A^{(1)}$ into $A^{(1)}=D^{(1)}+O^{(1)}$ as done in Step 2, where
$D^{(1)}_{n,m}\triangleq A^{(1)}_{n,m}\delta_{n,m}$, and $O^{(1)}_{n,m}\triangleq A^{(1)}_{n,m}-D^{(1)}_{n,m}$.

{\it Step 3.1.}
To calculate the elements of $A^{(1)}=(A_{n,m}^{(1)})$, firstly
divide the square $[0,1]\times[0,1]$ into two sub-domains $I_{1}$, $I_{2}$(see
Figure \ref{domainforA1}),
where $I_1$ represents the triangle enclosed by the lines $x=0$, $y=0$ and
$x+y=1$; $I_2$ the triangle enclosed by $x=1$, $y=1$ and $x+y=1$. It leads to
\begin{equation}
  \begin{aligned}
    &\int_0^1\int_0^1(x+y)^{2H-2}\cos(n^*x)\cos(m^*y)\diff x\diff y\\
    =&\left(\iint_{I_{1}}+\iint_{I_{2}}\right)(x+y)^{2H-2}\cos(n^*x)\cos(m^*y)\diff x\diff y.
  \end{aligned}
\end{equation}
Through the change of variables
\begin{equation}
  \begin{cases}
    u=x+y\\
    v=x-y,
  \end{cases}
\end{equation}
it maps $I_1$ and $I_2$ to $J_1$ and $J_2$ respectively. By changing of variables
in double integration, it implies that
\begin{equation}\label{imdidttforclot}
  \begin{aligned}
    &2\int_0^1\int_0^1(x+y)^{2H-2}\cos(n^*x)\cos(m^*y)\diff x\diff y\\
    =&\left(\int_0^1\diff u\int_{-u}^u+\int_1^2\diff u\int_{u-2}^{-u+2}\right)u^{2H-2}\cos(n^*(\frac{u+v}{2}))\cos(m^*(\frac{u-v}{2}))\diff v.
  \end{aligned}
\end{equation}
It's convenient to denote two integral terms on the right hand side of \eqref{imdidttforclot} by
$Q^{1}_{n,m}$ and $Q^{2}_{n,m}$.
\begin{figure}[htbp]
  \centering
\begin{tikzpicture}
  \draw[->] (-1,0) -- (2,0);
  \draw (2.3,0) node {$x$};
  \draw[->] (0,-1) -- (0,2);
  \draw (0,2.3) node {$y$};
  \draw (0,1)--(1,0);
  \draw (0,1)--(1,1);
  \draw (1,1)--(1,0);
  \draw (0.25,0.25) node {$I_1$};
  \draw (0.75,0.75) node {$I_2$};
  \draw [->,very thick] (2.55,0) -- (2.85,0);
  \draw[->] (3,0) -- (4,0);
  \draw[densely dotted] (4,0) -- (6,0);
  \draw[->] (6,0) -- (7,0);
  \draw (7.3,0) node {$u$};
  \draw[->] (4,-1) -- (4,2);
  \draw (4,2.3) node {$v$};
  \draw (4,0)--(5,1);
  \draw (4,0)--(5,-1);
  \draw (5,-1)--(5,1);
  \draw (5,1)--(6,0);
  \draw (5,-1)--(6,0);
  \draw (4.5,0) node {$J_1$};
  \draw (5.5,0) node {$J_2$};
\end{tikzpicture}
\caption{Domains for calculating $A^{(1)}_{n,m}$.}
\label{domainforA1}
\end{figure}
Combined with the formulae for trigeometric functions, it implies that
\begin{align}
  Q^{1}_{n,m}&=\frac{1}{2}\int_0^1u^{2H-2}\diff u\int_{-u}^u\left(\cos(\frac{m^*+n^*}{2}u-\frac{m^*-n^*}{2}v)+\cos(\frac{m^*-n^*}{2}u-\frac{m^*+n^*}{2}v)\right)\diff v,\label{1staiforliop}\\
  Q^{2}_{n,m}&=\frac{1}{2}\int_1^2u^{2H-2}\diff u\int_{u-2}^{-u+2}\left(\cos(\frac{m^*+n^*}{2}u-\frac{m^*-n^*}{2}v)+\cos(\frac{m^*-n^*}{2}u-\frac{m^*+n^*}{2}v)\right)\diff v.
\end{align}
Substituting the above identities into $A^{(1)}$, it can be deduced that
\begin{equation}
  A^{(1)}_{n,m}=\frac{H(2H-1)}{n^*m^*}\left(Q^{1}_{n,m}+Q^{2}_{n,m}\right).
\end{equation}

{\it Step 3.2.}
Since the singularity among the integrands in $A^{(1)}_{m,n}$ only
occurs at $(0,0)$, it seems that the contribution of $Q^{1}_{m,n}$ should be much
greater than the one of $Q^{2}_{m,n}$. To see it, the following integral identities will be needed.

\begin{lemma}\label{biiforasyma}
  Let $a$ be a real number, there hold
  \begin{align}
    \int_1^2u^{a}\cos(\omega u)\diff u&=\frac{2^{a}\sin(2\omega)-\sin{\omega}}{\omega}+O(\frac{1}{\omega^{2}}),\\
    \int_1^2u^{a}\sin(\omega u)\diff u&=-\frac{2^{a}\cos(2\omega)-\cos{\omega}}{\omega}+O(\frac{1}{\omega^{2}})
  \end{align}
  for $\omega\gg1$.
\end{lemma}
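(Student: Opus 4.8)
The plan is to prove both identities in Lemma \ref{biiforasyma} by a single integration by parts, treating the power $u^a$ as the slowly-varying factor and the oscillatory $\cos(\omega u)$ (or $\sin(\omega u)$) as the factor we integrate. First I would write
\begin{equation}
  \int_1^2 u^a\cos(\omega u)\diff u
  =\left[\frac{u^a\sin(\omega u)}{\omega}\right]_1^2
  -\frac{a}{\omega}\int_1^2 u^{a-1}\sin(\omega u)\diff u,
\end{equation}
so the boundary term gives exactly $\dfrac{2^a\sin(2\omega)-\sin\omega}{\omega}$, which is the claimed leading term. It then remains to show that the remaining integral term is $O(\omega^{-2})$ as $\omega\to\infty$.

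For the error estimate I would observe that $\int_1^2 u^{a-1}\sin(\omega u)\diff u$ is itself $O(\omega^{-1})$, again by one integration by parts: the boundary term is $O(\omega^{-1})$ since $u^{a-1}$ is bounded on $[1,2]$ (note $[1,2]$ is bounded away from $0$, so there is no integrability issue even when $a<1$), and the leftover integral $\frac{a-1}{\omega}\int_1^2 u^{a-2}\sin(\omega u)\diff u$ is $O(\omega^{-1})$ simply by bounding $|\sin|\le1$ and integrating the continuous bounded function $u^{a-2}$. Multiplying by the prefactor $a/\omega$ then yields the $O(\omega^{-2})$ bound. The second identity is handled identically, swapping the roles of sine and cosine and picking up the sign change: integrating $\sin(\omega u)$ produces $-\cos(\omega u)/\omega$, so the boundary term becomes $-\dfrac{2^a\cos(2\omega)-\cos\omega}{\omega}$, and the same argument controls the remainder.

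I do not expect any genuine obstacle here; the only point requiring a word of care is making explicit that all the implied constants in the $O(\cdot)$ terms depend only on $a$ (through the bounds $\sup_{u\in[1,2]}|u^{a-1}|$ and $\sup_{u\in[1,2]}|u^{a-2}|$) and not on $\omega$, so that the asymptotics are uniform in $\omega\gg1$ for each fixed $a$. A slightly slicker alternative, if one prefers to avoid iterating integration by parts, is to integrate by parts twice in one shot, writing the second-order boundary term explicitly and bounding the resulting $\int_1^2 u^{a-2}(\pm\cos(\omega u))\diff u$ by its trivial $L^1$ bound on $[1,2]$; this produces the $O(\omega^{-2})$ term together with an explicit (but irrelevant) $\omega^{-2}$ boundary contribution. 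Either route is routine, so the lemma is essentially a bookkeeping exercise in stationary-phase-free oscillatory integral estimation on a compact interval.
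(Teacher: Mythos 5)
Your proposal is correct and follows essentially the same route as the paper: one integration by parts extracts the boundary term $\frac{2^a\sin(2\omega)-\sin\omega}{\omega}$, and the remaining $\frac{a}{\omega}\int_1^2 u^{a-1}\sin(\omega u)\diff u$ is shown to be $O(\omega^{-2})$. The only (immaterial) difference is that the paper bounds $\int_1^2 u^{a-1}\sin(\omega u)\diff u$ by $O(\omega^{-1})$ via the second mean value theorem for Riemann integrals, whereas you do it by a second integration by parts with trivial bounds on $[1,2]$; note the small slip that this second integration by parts produces $\cos(\omega u)$ rather than $\sin(\omega u)$ in the leftover integral, which does not affect the estimate.
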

\begin{proof}
  It's necessary to prove the first identity since the second could be proved in
  a similar way. Noticing that
  \begin{equation}
    \frac{1}{\omega}\int_1^2\diff(u^{a}\sin(\omega u))=\frac{2^{a}\sin(2\omega)-\sin{\omega}}{\omega}
  \end{equation}
  it implies that
  \begin{equation}
    \int_1^2u^{a}\cos(\omega u)\diff u
    =\frac{2^{a}\sin(2\omega)-\sin{\omega}}{\omega}-\frac{a}{\omega}\int_1^2u^{a-1}\sin(\omega u)\diff u.
  \end{equation}
  Combining with the second mean value theorem for Riemann integrals, the desired
  result is obtained.
\end{proof}

To calculate $Q^{2}_{n,m}$, the order of $\int_1^2u^{2H-2}\sin(m^*u)\diff u$
needs to be estimated. By setting $a=2H-2$ and $\omega=m^*$,
Lemma \ref{biiforasyma} gives
\begin{align}
  \int_1^2u^{2H-2}\cos(m^*u)\diff u&=\frac{(-1)^{m+1}}{m^*}+O(\frac1{m^{2}}),\label{12cos2}\\
  \int_1^2u^{2H-2}\sin(m^*u)\diff u&=\frac{2^{2H-2}}{m^*}+O(\frac1{m^{2}})\label{12sin2nonremainder}
\end{align}
for $m\gg1$. Base on same idea, the order of remainder term can be improved. For
example, it's true that
\begin{align}\label{12cos1}
  \begin{aligned}
    \int_1^2u^{2H-1}\cos(m^*u)\diff u
    =&-\frac{2H-1}{m^*}\int_1^2u^{2H-2}\sin(m^*u)\diff u-\frac{(-1)^m}{m^*}\\
    =&\frac{(-1)^{m+1}}{m^*}+O(\frac1{m^{2}})
  \end{aligned}
\end{align}
for $m\gg1$. Moreover, there holds
\begin{equation}
  \frac{1}{m^*-n^*}\int_1^2u^{2H-2}(\sin(m^*u)-\sin(n^*u))\diff u
  =\frac{1}{m^*-n^*}\left(\frac{2^{2H-2}}{m^*}-\frac{2^{2H-2}}{n^*}\right)
  +O(\frac1{mn})\label{12sin2remainder}
\end{equation}
for $m>n\gg1$.

\begin{lemma}\label{rfmlmrtkl}
  If $a\in(0,1)$, there hold
  \begin{align}
    \int_0^1x^{a-1}\cos(\omega x)\diff x&=\frac{\Gamma(a)\cos(\frac\pi2a)}{\omega^a}+\frac{\sin{\omega}}{\omega}+O(\frac{1}{\omega^{2}}),\\
    \int_0^1x^{a-1}\sin(\omega x)\diff x&=\frac{\Gamma(a)\sin(\frac\pi2a)}{\omega^a}-\frac{\cos{\omega}}{\omega}+O(\frac{1}{\omega^{2}})
  \end{align}
  for $\omega\gg1$.
\end{lemma}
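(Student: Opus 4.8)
The plan is to derive both asymptotic formulae in Lemma \ref{rfmlmrtkl} from the classical behaviour of the incomplete-to-complete gamma function, i.e. from the Fresnel-type integral $\int_0^\infty x^{a-1}\me^{\mi\omega x}\diff x$. First I would write $\int_0^1 x^{a-1}\me^{\mi\omega x}\diff x = \int_0^\infty x^{a-1}\me^{\mi\omega x}\diff x - \int_1^\infty x^{a-1}\me^{\mi\omega x}\diff x$, where the first integral on the right is understood as an (Abel-regularised / oscillatory) improper integral since $a\in(0,1)$. The value of the full integral is the standard identity $\int_0^\infty x^{a-1}\me^{\mi\omega x}\diff x = \Gamma(a)\,\omega^{-a}\me^{\mi\pi a/2}$ for $\omega>0$, obtained by rotating the contour to the positive imaginary axis (valid precisely because $0<a<1$ guarantees convergence at both ends). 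Taking real and imaginary parts already produces the leading terms $\Gamma(a)\cos(\tfrac{\pi}{2}a)\,\omega^{-a}$ and $\Gamma(a)\sin(\tfrac{\pi}{2}a)\,\omega^{-a}$.

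Next I would estimate the tail $\int_1^\infty x^{a-1}\me^{\mi\omega x}\diff x$ by repeated integration by parts. One integration by parts gives $\int_1^\infty x^{a-1}\me^{\mi\omega x}\diff x = -\frac{\me^{\mi\omega}}{\mi\omega} - \frac{a-1}{\mi\omega}\int_1^\infty x^{a-2}\me^{\mi\omega x}\diff x$, and the boundary term at infinity vanishes in the regularised sense because $a-1<0$. Since $x^{a-2}$ is integrable on $[1,\infty)$, the remaining integral is $O(1/\omega)$ by the second mean value theorem (exactly the same device used in the proof of Lemma \ref{biiforasyma}), so $\int_1^\infty x^{a-1}\me^{\mi\omega x}\diff x = -\frac{\me^{\mi\omega}}{\mi\omega} + O(\omega^{-2})$. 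Combining, $\int_0^1 x^{a-1}\me^{\mi\omega x}\diff x = \Gamma(a)\,\omega^{-a}\me^{\mi\pi a/2} + \frac{\me^{\mi\omega}}{\mi\omega} + O(\omega^{-2})$; writing $\me^{\mi\omega}/(\mi\omega) = \sin\omega/\omega - \mi\cos\omega/\omega$ and separating real and imaginary parts yields both claimed expansions simultaneously.

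The main obstacle is making the oscillatory improper integral $\int_0^\infty x^{a-1}\me^{\mi\omega x}\diff x$ and the contour rotation rigorous, since for $a\le 1$ the integrand is not absolutely integrable near infinity; the clean fix is to insert a convergence factor $\me^{-\epsilon x}$, compute $\int_0^\infty x^{a-1}\me^{(\mi\omega-\epsilon)x}\diff x = \Gamma(a)(\epsilon-\mi\omega)^{-a}$, and let $\epsilon\downarrow 0$, noting $(\epsilon-\mi\omega)^{-a}\to(\mathrm{-}\mi\omega)^{-a}=\omega^{-a}\me^{\mi\pi a/2}$ (choosing the branch with argument in $(-\pi/2,0)$). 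An alternative, entirely elementary route avoiding complex analysis is to split $\int_0^1 = \int_0^{1/\omega} + \int_{1/\omega}^1$: on $[0,1/\omega]$ estimate directly using $|\cos(\omega x)-1|\le \omega^2x^2/2$ (giving the leading term up to lower order after rescaling $x=t/\omega$), and on $[1/\omega,1]$ integrate by parts. Either way, once the $\omega^{-a}$ coefficient is identified as $\Gamma(a)\cos(\tfrac{\pi}{2}a)$ respectively $\Gamma(a)\sin(\tfrac{\pi}{2}a)$, the rest is the routine integration-by-parts bookkeeping already illustrated in Lemma \ref{biiforasyma}, and I would only write out the cosine case in detail, remarking that the sine case is identical.
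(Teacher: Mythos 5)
Your proposal is correct and follows essentially the same route as the paper's proof: both decompose the integral as the complete Fresnel-type integral $\int_0^{\infty}$ minus a tail, evaluate the former as $\Gamma(a)\omega^{-a}\me^{\mi\pi a/2}$ by contour rotation (the paper first rescales $t=\omega x$ and quotes $\int_0^\infty t^{a-1}\cos t\,\diff t=\Gamma(a)\cos(\tfrac{\pi}{2}a)$, which is the same identity), and both handle the tail by one integration by parts plus the second mean value theorem. Your only cosmetic differences are working with the complex exponential so as to obtain both expansions at once, and spelling out the $\me^{-\epsilon x}$ regularisation that the paper leaves implicit.
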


The proof of Lemma \ref{rfmlmrtkl} is postponed in the next subsection.
By setting $a=2H-1$ and $\omega=m^*$, the two identities in Lemma \ref{rfmlmrtkl} are turned into
\begin{align}
  \int_0^1x^{2H-2}\cos(m^*x)\diff x&=\frac{\Gamma(2H-1)\sin(\pi{H})}{m^{*2H-1}}
                                     +\frac{(-1)^m}{m^*}+O(\frac{1}{m^{2}}),\label{01cos2}\\
  \int_0^1x^{2H-2}\sin(m^*x)\diff x&=-\frac{\Gamma(2H-1)\cos(\pi{H})}{m^{*2H-1}}+O(\frac{1}{m^{2}}).\label{01sin2nonremainder}
\end{align}
for $m\gg1$.

On the one hand, it was mentioned in Remark \ref{flawinKLl} that the asymptotics in
\eqref{dpofiopforfbm} are not correct. As a matter of fact, using Lemma \ref{rfmlmrtkl},
correct ones could be deduced. That is, the diagonal part of the matrix corresponding to fractional Brownian motion can be revised as
\begin{equation}\label{rcdpofiopforfbm}
  D_{n,n}=\frac{\sin(\pi{H})\Gamma(2H+1)}{n^{*2H+1}}+\frac{(-1)^n}{n^{*3}}+O(\frac{1}{n^{4}})
\end{equation}
for $n\gg1$.

On the other hand, by using the integration by parts(see The proof of Lemma
\ref{rfmlmrtkl}) and the second mean value
theorem for Riemann integrals, it's valid that
\begin{equation}
  \int_0^1u^{2H-1}\cos(m^*u)\diff u=\frac{(-1)^m}{m^*}+\frac{\Gamma(2H)\cos(\pi{H})}{m^{*2H}}+O(\frac1{m^{3}}),\label{01cos1}
\end{equation}
for $m\gg1$. Furthermore, there holds
\begin{equation}
  \frac{1}{m^*-n^*}\int_0^1x^{2H-2}\sin(m^*x)\diff x
  =-\frac{\Gamma(2H-1)\cos(\pi{H})}{m^*-n^*}(\frac1{m^{*2H-1}}-\frac1{n^{*2H-1}})+O(\frac{1}{mn})\label{01sin2remainder}
\end{equation}
for $m>n\gg1$.

{\it Step 3.3.}
Calculate $Q^{1}_{n,m}$ and $Q^{2}_{n,m}$ in the case of $m>n\gg1$. Firstly, Using the
fundamental theorem for Riemann integrals in \eqref{1staiforliop}, it implies that
\begin{equation}
  \begin{aligned}
    Q^{1}_{n,m}=&\frac1{m^*+n^*}\int_0^1u^{2H-2}(\sin(m^*u)+\sin(n^*u))\diff u\\
    &+\frac1{m^*-n^*}\int_0^1u^{2H-2}(\sin(m^*u)-\sin(n^*u))\diff u
  \end{aligned}
\end{equation}
which gives
\begin{equation}
  \begin{aligned}    Q^{1}_{n,m}=&-\frac{\Gamma(2H-1)\cos(\pi{H})}{m^*+n^*}(\frac1{m^{*2H-1}}+\frac1{n^{*2H-1}}+O(\frac{1}{m^{2}})+O(\frac{1}{n^{2}}))\\    &-\frac{\Gamma(2H-1)\cos(\pi{H})}{m^*-n^*}(\frac1{m^{*2H-1}}-\frac1{n^{*2H-1}})+O(\frac{1}{mn})
  \end{aligned}
\end{equation}
in terms of Lemma \ref{rfmlmrtkl}(see \eqref{01sin2nonremainder} and
\eqref{01sin2remainder}). Observing that for $m>{n}\gg1$,
\begin{equation}
  \frac{1}{m^*+n^*}(O(\frac{1}{m^{2}}+O(\frac{1}{n^{2}}))=O(\frac1{mn})
\end{equation}
it means that
\begin{equation}
  Q^{1}_{n,m}=-\frac{\Gamma(2H-1)\cos(\pi{H})}{m^*+n^*}(\frac1{m^{*2H-1}}+\frac1{n^{*2H-1}})
  -\frac{\Gamma(2H-1)\cos(\pi{H})}{m^*-n^*}(\frac1{m^{*2H-1}}-\frac1{n^{*2H-1}})+O(\frac1{mn})
\end{equation}
i.e. the order of $Q^{1}_{n,m}$ is the same as $m^{-1}n^{-(2H-1)}$.

Next goal is to calculate $Q^{2}_{n,m}$. It's clear that
\begin{equation}
  \begin{aligned}
    Q^{2}_{n,m}=&\frac{(-1)^{m+n+1}}{m^*+n^*}\int_1^2u^{2H-2}(\sin(m^*u)+\sin(m^*u))\diff u\\
    &+\frac{(-1)^{m+n+1}}{m^*-n^*}\int_1^2u^{2H-2}(\sin(m^*u)-\sin(n^*u))\diff u\\
  \end{aligned}
\end{equation}
which leads to
\begin{equation}
  \begin{aligned}    Q^{2}_{n,m}=&\frac{(-1)^{m+n+1}}{m^*+n^*}(\frac{2^{2H-2}}{m^*}+\frac{2^{2H-2}}{n^*}+O(\frac1{m^{2}})+O(\frac1{n^{2}}))\\
    &+\frac{(-1)^{m+n+1}}{m^*-n^*}(\frac{2^{2H-2}}{m^*}-\frac{2^{2H-2}}{n^*})+O(\frac1{mn})
  \end{aligned}
\end{equation}
in terms of Lemma \ref{biiforasyma}(see \eqref{12sin2nonremainder} and \eqref{12sin2remainder}).
After all, it gives that
\begin{equation}
  Q^{2}_{n,m}=O(\frac1{mn})
\end{equation}
which verifies that the contribution of $Q^{2}_{n,m}$ is smaller than the one of
$Q^{1}_{n,m}$.

Since
\begin{equation}\label{offdpofiopforext}
  O_{n,m}^{(1)}=A_{n,m}^{(1)}
  =\frac{H(2H-1)}{n^*m^*}\left(Q^{1}_{n,m}+Q^{2}_{n,m}\right)
\end{equation}
for $m\neq n$, $O_{n,m}^{(1)}$ is the same order as $m^{-2}n^{-2H}$ for $m>{n}\gg1$.

{\it Step 3.4.}
Calculate $Q^{1}_{n,m}$ and $Q^{2}_{n,m}$ in the case of $m=n\gg1$. At first,
\eqref{1staiforliop} could be transformed into
\begin{equation}
  Q^{1}_{m,m}=\int_0^1u^{2H-1}\cos(m^*u)\diff u+\frac1{m^*}\int_0^1u^{2H-2}\sin(m^*u)\diff u
\end{equation}
which gives
\begin{equation}
  Q^{1}_{m,m}=\frac{(-1)^m}{m^*}+\frac{\cos(\pi{H})}{m^{*2H}}(\Gamma(2H)-\Gamma(2H-1))+O(\frac{1}{m^{3}})
\end{equation}
by virtue of \eqref{01cos1} and \eqref{01sin2nonremainder}. Secondly, it's valid that
\begin{equation}
  Q^{2}_{m,m}=-\int_1^2u^{2H-1}\cos(m^*u)\diff u+2\int_1^2u^{2H-2}\cos(m^*u)\diff u+\frac1{m^*}\int_1^2u^{2H-2}\sin(m^*u)\diff u
\end{equation}
which implies that
\begin{equation}
  Q^{2}_{m,m}=\frac{(-1)^{m+1}}{m^*}+O(\frac1{m^{2}})
\end{equation}
by using \eqref{12cos2}, \eqref{12sin2nonremainder} and \eqref{12cos1}.
Since
\begin{equation}
  D_{m,m}^{(1)}=A_{m,m}^{(1)}
  =\frac{H(2H-1)}{m^{*2}}\left(Q^{1}_{m,m}+Q^{2}_{m,m}\right)
\end{equation}
it implies that
\begin{equation}\label{dpofiopforext}
  D^{(1)}_{m,m}=\frac{(H-1)\cos(\pi{H})\Gamma(2H+1)}{m^{*2H+2}}+O(\frac1{m^{4}})
\end{equation}
i.e. $D_{m,m}^{(1)}$ is the same order as $m^{-2H-2}$ for $m\gg1$.

{\it Step 4.}
Summarise all asymptotic information for $A_{sub}$. Noting that
$A_{sub}=2A-A^{(1)}$ and $A^{(1)}=D^{(1)}+O^{(1)}$, $A_{sub}$ has also a decomposition
$A_{sub}=D_{sub}+O_{sub}$ just like the linear operator $A$ in Step 2, if $D_{sub}=2D-D^{(1)}$ and
$O_{sub}=2O-O^{(1)}$ are set.

The orders of the elements of $A_{sub}$ are as follows: As for the diagonal piece,
combined \eqref{rcdpofiopforfbm} with \eqref{dpofiopforext}, it gives
\begin{equation}\label{dpofttopforsfbm}
  \begin{aligned}
    (D_{sub})_{m,m}=&\frac{2\sin(\pi{H})\Gamma(2H+1)}{m^{*2H+1}}+\frac{(-1)^m}{m^{*3}}\\
    &+\frac{(H-1)\cos(\pi{H})\Gamma(2H+1)}{m^{*2H+2}}+O(\frac{1}{m^{4}})
  \end{aligned}
\end{equation}
for $m\gg1$; As for the off-diagonal piece, noticing \eqref{offdpofiopforfbm} and
\eqref{offdpofiopforext}, it implies
\begin{equation}
  (O_{sub})_{n,m}\asymp \frac{1}{m^{2}n^{2H}}
\end{equation}
for $m>n\gg1$.

\begin{remark}\label{drpffimpcnd}
  During processing the proof of Theorem \ref{1strstinpp}, $H\in(\frac12,1)$ is
  imposed. In fact, $A_{n,m}$(see \eqref{merofttopforsfbm}) is holomorphic with respect to the variable $H$ in
  $(0,1)$, so are $D_{n,m}$ and $O_{n,m}$. Moreover, the first three terms on the right hand side of
  \eqref{dpofttopforsfbm} are holomorphic in $H\in(0,1)$, so is the remaining term in \eqref{dpofttopforsfbm}.
  In terms of the principle of analytic continuation, \eqref{dpofttopforsfbm} is still
  valid for $H\in(0,1)$. Same argument works for off-diagonal piece in the case
  of $H\in(0,1)$.
\end{remark}

{\it Step 5.}
It's clear that $D_{sub}$ is self-adjoint, positive and compact in $\ell^{2}$.
For any fixed $\beta\in(0,1)$, $D_{sub}^{\beta}$ is well-defined by spectral
decomposition theorem. Hence, $O_{sub}$ can be turned into
\begin{equation}
  O_{sub}=D_{sub}^{\beta}\widehat{O}_{sub}D_{sub}^{\beta},
\end{equation}
where
\begin{equation}
  \widehat{O}_{sub}=D_{sub}^{-\beta}O_{sub}D_{sub}^{-\beta}.
\end{equation}
The order of the elements of $D^{-\beta}_{sub}$ is $m^{\beta(2H+1)}$ for $m\gg1$, so the
order of the ones of $\widehat{O}_{sub}$ is $n^{(2H+1)\beta-2}m^{(2H+1)\beta-2H}$
for $m>n\gg1$. If $\beta\in(0,\frac12)$, the elements of $\widehat{O}_{sub}$ are square summable. Therefore, $\widehat{O}_{sub}$ is a Hilbert-Schmidt operator(and thus compact). The eigenvalues of $\widehat{O}_{sub}$ are square summable and thus\;(arranged in order of decreasing magnitude) satisfy
\begin{equation}
  \abs{\lambda_n(\widehat{O}_{sub})}\lesssim n^{-\frac12}.
\end{equation}

Finally, recall the following two results(cf. \cite{PS}).
\begin{lemma}[Porter and Stirling]\label{egvlforcgtfofcop}
  If $T$, $K$ are compact and $K$ is self-adjoint then the eigenvalues of $T^*KT$ satisfy
  \begin{equation}
    \abs{\lambda_n(T^*KT)}\le\min_{j\in\set{1,\cdots,n}}\abs{\lambda_j(K)}\;\lambda_{n-j+1}(T^*T).
  \end{equation}
\end{lemma}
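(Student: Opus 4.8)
\medskip
\noindent\emph{Proof proposal.} The idea is to convert the quadratic expression $T^{*}KT$ into a purely multiplicative estimate for singular values. Recall that for a self-adjoint compact operator $C$ one has $s_{k}(C)=\abs{\lambda_{k}(C)}$ when the eigenvalues are listed in order of decreasing modulus, where $s_{k}(\cdot)$ denotes the $k$-th singular value, and that for the positive operator $T^{*}T$ the singular values coincide with the eigenvalues; thus it is enough to bound $s_{n}(T^{*}KT)$ by $s_{j}(K)\,\lambda_{n-j+1}(T^{*}T)$ for each $j\in\set{1,\dots,n}$ and then to minimise over $j$.

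The heart of the matter is the case $K\ge0$. Here one uses the factorisation $T^{*}KT=(K^{1/2}T)^{*}(K^{1/2}T)$, which gives $\lambda_{n}(T^{*}KT)=s_{n}(K^{1/2}T)^{2}$. Combining this with the classical multiplicative inequality $s_{p+q-1}(AB)\le s_{p}(A)\,s_{q}(B)$, applied to $A=K^{1/2}$, $B=T$ with $p=j$ and $q=n-j+1$, yields
\begin{equation*}
  \lambda_{n}(T^{*}KT)=s_{n}(K^{1/2}T)^{2}\le s_{j}(K^{1/2})^{2}\,s_{n-j+1}(T)^{2}=\lambda_{j}(K)\,\lambda_{n-j+1}(T^{*}T),
\end{equation*}
and taking the minimum over $j\in\set{1,\dots,n}$ finishes this case. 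To prove the multiplicative inequality I would use the subspace form of the min--max principle, $s_{k}(C)=\min\set{\,\sup_{x\in L,\,\|x\|\le1}\|Cx\|:L\text{ closed},\ \mathrm{codim}\,L\le k-1\,}$: choosing subspaces $L_{1}$ (with $\mathrm{codim}\,L_{1}\le p-1$) and $L_{2}$ (with $\mathrm{codim}\,L_{2}\le q-1$) almost realising $s_{p}(A)$ and $s_{q}(B)$, the subspace $L=L_{2}\cap B^{-1}(L_{1})$ has codimension at most $(p-1)+(q-1)$, and for $x\in L$ with $\|x\|\le1$ one gets $\|ABx\|\le s_{p}(A)\|Bx\|\le s_{p}(A)s_{q}(B)$, so that $s_{p+q-1}(AB)\le s_{p}(A)s_{q}(B)$.

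For a general self-adjoint $K$ I would reduce to the positive case via the decomposition $K=K_{+}-K_{-}$ into positive and negative parts. The operator inequalities $-T^{*}K_{-}T\le T^{*}KT\le T^{*}K_{+}T$, together with monotonicity of eigenvalues with respect to the operator order, bound the positive eigenvalues of $T^{*}KT$ by those of $T^{*}K_{+}T$ and the moduli of its negative eigenvalues by those of $T^{*}K_{-}T$; since the spectra of $K_{\pm}$ are subsequences of $(\abs{\lambda_{j}(K)})_{j\ge1}$, the bound already proved applies to each summand. I expect the real obstacle to lie precisely in this last reduction: the positive and negative eigenvalue sequences of $T^{*}KT$ interleave in a way that is not controlled a priori, so recombining the two one-sided estimates into the single estimate with the shifted index $n-j+1$ requires careful bookkeeping. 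By contrast, the positive-$K$ argument is completely routine once the multiplicative singular-value inequality is in hand.
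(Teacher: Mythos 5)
First, a point of reference: the paper does not prove this lemma at all --- it is quoted from Porter--Stirling \cite{PS} and used as a black box, so there is no in-paper argument to compare yours against. Your treatment of the positive case is correct and is the standard argument: for $K\ge0$ the factorisation $T^*KT=(K^{1/2}T)^*(K^{1/2}T)$ together with the multiplicative singular-value inequality $s_{p+q-1}(AB)\le s_p(A)s_q(B)$ (whose subspace proof you sketch correctly) gives exactly $\lambda_n(T^*KT)\le\lambda_j(K)\,\lambda_{n-j+1}(T^*T)$.

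The obstacle you flag in the indefinite case, however, is not mere bookkeeping --- it is fatal, because the inequality as quoted is \emph{false} for indefinite $K$. Take a two-dimensional subspace with orthonormal basis $e_1,e_2$ (extend all operators by zero to keep them compact), let $K e_1=e_1$, $Ke_2=-e_2$, and let $T=P_u+\epsilon P_v$ with $u=(e_1+e_2)/\sqrt2$, $v=(e_1-e_2)/\sqrt2$, $0<\epsilon<1$, where $P_u,P_v$ are the rank-one orthogonal projections. Since $P_uKP_u=P_vKP_v=0$ and $P_uKP_v+P_vKP_u=K$, one computes $T^*KT=TKT=\epsilon K$, so $\abs{\lambda_2(T^*KT)}=\epsilon$; but $\abs{\lambda_1(K)}=\abs{\lambda_2(K)}=1$, $\lambda_1(T^*T)=1$, $\lambda_2(T^*T)=\epsilon^2$, so the right-hand side for $n=2$ is $\min\set{\epsilon^2,1}=\epsilon^2<\epsilon$. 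What does survive your decomposition $K=K_+-K_-$ is the one-sided estimate $\mu^+_{p+q-1}(T^*KT)\le\mu^+_p(K)\,\lambda_q(T^*T)$ for the decreasingly ordered \emph{positive} eigenvalues (proved by restricting the Rayleigh quotient to the codimension-$(p+q-2)$ subspace $\set{x:\;x\perp N,\ Tx\perp M}$, with $M$, $N$ spanned by the top eigenvectors of $K_+$ and of $T^*T$), together with its negative counterpart. Recombining the two by counting eigenvalues of modulus exceeding $c=\abs{\lambda_j(K)}\lambda_k(T^*T)$ yields only $\abs{\lambda_{j+2k-1}(T^*KT)}\le\abs{\lambda_j(K)}\,\lambda_k(T^*T)$: the index in the $T^*T$ slot doubles, which is precisely the interleaving loss you anticipated, and the example above shows it cannot be removed. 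Fortunately this weaker, correct version is all the paper actually needs: in Step 5 the lemma is invoked with both indices of order $n$, and replacing $\lambda_{n-j+1}(D_{sub}^{2\beta})$ by $\lambda_{\lceil (n-j)/2\rceil}(D_{sub}^{2\beta})$ changes nothing in the asymptotic rates. So your positive-case proof stands as is, and for the general case you should prove and cite the one-sided or index-doubled statement rather than the one printed here.
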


\begin{lemma}[Porter and Stirling]\label{egvlforamtfofcop}
  If $K_1,K_2$ are compact and self-adjoint then we have
  \begin{equation}
    \lambda_n(K_1+K_2)\le\min_{j\in\set{1,\cdots,n}}\abs{\lambda_{n-j+1}(K_1)}+\abs{\lambda_j(K_2)}.
  \end{equation}
\end{lemma}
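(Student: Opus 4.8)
The convention in force here (see the discussion preceding Lemma \ref{egvlforcgtfofcop} and the treatment of $\widehat{O}_{sub}$ in Step 5) is that the eigenvalues of a compact self-adjoint operator are listed as $\lambda_1,\lambda_2,\dots$ in \emph{decreasing order of magnitude}. Consequently, for such an operator $K$ the number $\abs{\lambda_k(K)}$ is exactly its $k$-th approximation number,
\[
  \abs{\lambda_k(K)}=\min\set{\|K-F\|:\ F\ \text{bounded on }\ell^2,\ \operatorname{rank}F\le k-1},
\]
and the minimum is attained, namely by the operator obtained from the spectral expansion of $K$ upon discarding all but its $k-1$ eigenvalues of largest modulus; this is the Schmidt (Eckart--Young) characterization, valid for every compact self-adjoint operator. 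My plan is to read the inequality off from this identity. First I would record it, together with the trivial bound $\lambda_n(M)\le\abs{\lambda_n(M)}$ for any self-adjoint $M$, so that it suffices to establish $\abs{\lambda_n(K_1+K_2)}\le\abs{\lambda_{n-j+1}(K_1)}+\abs{\lambda_j(K_2)}$ for each fixed $j\in\set{1,\cdots,n}$.

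The core of the argument is then a one-line rank count. Fix $j\in\set{1,\cdots,n}$ and choose finite-rank operators $F_1,F_2$ realizing the two minima above: $\operatorname{rank}F_1\le n-j$ with $\|K_1-F_1\|=\abs{\lambda_{n-j+1}(K_1)}$, and $\operatorname{rank}F_2\le j-1$ with $\|K_2-F_2\|=\abs{\lambda_j(K_2)}$. Then $F:=F_1+F_2$ satisfies $\operatorname{rank}F\le(n-j)+(j-1)=n-1$, so $F$ is admissible in the approximation-number formula for $\abs{\lambda_n(K_1+K_2)}$, and therefore
\[
  \abs{\lambda_n(K_1+K_2)}\le\|(K_1+K_2)-F\|\le\|K_1-F_1\|+\|K_2-F_2\|=\abs{\lambda_{n-j+1}(K_1)}+\abs{\lambda_j(K_2)}.
\]
Since $\lambda_n(K_1+K_2)\le\abs{\lambda_n(K_1+K_2)}$, taking the minimum over $j\in\set{1,\cdots,n}$ gives the stated inequality.

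I do not expect a serious obstacle: this is a version of the Weyl--Ky Fan subadditivity of singular values, and the only real care needed is to align it with the magnitude-ordering convention used in the paper — that is, to observe that $\abs{\lambda_k(\cdot)}$ coincides with the $k$-th approximation number for self-adjoint operators, after which everything is powered by the trivial estimate $\operatorname{rank}(F_1+F_2)\le\operatorname{rank}F_1+\operatorname{rank}F_2$. No property of $K_{sub}^H$ or of the explicit asymptotics derived above enters. (Alternatively one could run the Courant--Fischer min--max argument, which yields the analogue of the inequality for the eigenvalues arranged by value rather than by magnitude; but since the lemma is used with $K_2=O_{sub}$, an indefinite operator for which it is precisely the $j$-th singular value that matters, the approximation-number proof above is the one compatible with the intended application together with Lemma \ref{egvlforcgtfofcop}.)
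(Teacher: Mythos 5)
The paper itself offers no proof of this lemma: it is quoted verbatim from Porter and Stirling \cite{PS} and used as a black box, so there is no in-paper argument to compare against. Your proof is correct and is essentially the standard one from that reference: for a compact self-adjoint operator with eigenvalues listed by decreasing magnitude, $\abs{\lambda_k(K)}$ equals the $k$-th approximation number $\min\set{\|K-F\|:\operatorname{rank}F\le k-1}$ (attained by truncating the spectral expansion), and the inequality then follows from the rank count $\operatorname{rank}(F_1+F_2)\le(n-j)+(j-1)=n-1$ together with the triangle inequality and $\lambda_n(M)\le\abs{\lambda_n(M)}$. The index arithmetic checks out for every $j\in\set{1,\cdots,n}$, and your remark that one must use the magnitude-ordering (singular-value) characterization rather than a naive Courant--Fischer min--max on $\langle K\varphi,\varphi\rangle$ is exactly the right point of care, since $O_{sub}$ is indefinite in the intended application.
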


Given any $\delta\in(0,\frac12)$, by setting $\beta=\frac12-\delta$, it's true that
\begin{align*}
  \abs{\lambda_n(O_{sub})}
  =&\abs{\lambda_n(D_{sub}^\beta\widehat{O}_{sub}D_{sub}^\beta)}
  \leq\abs{\lambda_{n-j}(\widehat{O}_{sub})}\abs{\lambda_j(D_{sub}^{2\beta})}\\
  \lesssim&n^{-\frac12}n^{-2\beta(2H+1)}=n^{-2H-\frac32+(4H+2)\delta}
\end{align*}
in terms of Lemma \ref{egvlforcgtfofcop}. Since $\delta\in(0,\frac12)$ is
arbitarily chosen, the above inequality can be rewritten as
\begin{equation}
  \abs{\lambda_n(O_{sub})}\lesssim{n^{-2H-\frac32+\delta}}.
\end{equation}

Now, Lemma \ref{egvlforamtfofcop} yields
\begin{equation}
  \begin{aligned}
    \lambda_n(A_{sub})
    &\le\abs{\lambda_{n-n^\alpha}(D_{sub})}+\abs{\lambda_{n^\alpha}(O_{sub})}\\
    &\le\frac{\gamma_H}{n^{2H+1}}(1+\frac{n^\alpha}{n-n^\alpha})^{2H+1}+O((n-n^\alpha)^{-3})+O(n^{-\alpha(2H+\frac32-\delta)})\\
    &=\frac{\gamma_H}{n^{2H+1}}(1+(2H+1)\frac{n^\alpha}{n-n^\alpha}+O(\frac{n^{2\alpha}}{(n-n^\alpha)^2}))\\
    &+O((n-n^\alpha)^{-3})+O(n^{-\alpha(2H+\frac32-\delta)})\\
    &=\frac{\gamma_H}{n^{2H+1}}+O(n^{-2H-2+\alpha})+O(n^{-3})+O(n^{-\alpha(2H+\frac32-\delta)})
    \end{aligned}
\end{equation}
by setting $K_1=D_{sub}$, $K_2=O_{sub}$ and $j=n^\alpha$.
Letting $2H+2-\alpha=\alpha(2H+\frac32)$(i.e. $\alpha=\frac{2H+2}{2H+\frac52}$)
and making use of the arbitrariness of $\delta\in(0,\frac{1}{2})$, it implies that
\begin{equation}
  \lambda_n(A_{sub})\le\frac{\gamma_H}{n^{2H+1}}+o(n^{-\frac{(2H+2)(4H+3)}{4H+5}+\delta})+O(n^{-3}).
\end{equation}
There are two cases:
\begin{enumerate}
\item If $\frac{(2H+2)(4H+3)}{4H+5}<3$(i.e. $0<H<\frac{-1+\sqrt{74}}{8}$),
  there holds
  \begin{equation}
    \lambda_n(A_{sub})\le\frac{\gamma_H}{n^{2H+1}}+o(n^{-\frac{(2H+2)(4H+3)}{4H+5}+\delta});
  \end{equation}
\item If $\frac{(2H+2)(4H+3)}{4H+5}\ge3$(i.e. $\frac{-1+\sqrt{74}}{8}\le{H}<1$),
  there holds
  \begin{equation}
    \lambda_n(A_{sub})\le\frac{\gamma_H}{n^{2H+1}}+O(n^{-3}).
  \end{equation}
\end{enumerate}
Repeating the above argument with $K_1=A_{sub}$, $K_2=-O_{sub}$ gives
\begin{enumerate}
\item If $0<H<\frac{-1+\sqrt{74}}{8}$, there holds
  \begin{equation}
    \lambda_n(A_{sub})\ge\frac{\gamma_H}{n^{2H+1}}+o(n^{-\frac{(2H+2)(4H+3)}{4H+5}+\delta});
  \end{equation}
\item If $\frac{-1+\sqrt{74}}{8}\le{H}<1$, there holds
  \begin{equation}
    \lambda_n(A_{sub})\ge\frac{\gamma_H}{n^{2H+1}}+O(n^{-3}).
  \end{equation}
\end{enumerate}

The proof is completed.\qed\\

\subsection{Proof of Lemma \ref{rfmlmrtkl}}

It's sufficient to prove the first identity. The proof of the second identity is similar to the first
one. First, by changing of variable in integration, it can be deduced that
\begin{equation}
  \int_0^1x^{a-1}\cos(\omega x)\diff x=\frac{1}{\omega^a}\left(\int_0^{+\infty}t^{a-1}\cos{t}\diff t-\int_{\omega}^{+\infty}t^{a-1}\cos{t}\diff t\right).
\end{equation}
On the one hand, by using contour integration, it's easy to verify
\begin{equation}
  \int_0^{\infty}t^{a-1}\cos{t}\diff t=\Gamma(a)\cos(\frac\pi2a).
\end{equation}
On the other hand, by using the integration by parts and the
second mean value theorem for Riemann integrals, it's valid that
\begin{equation}
  \begin{aligned}
    \int_\omega^{\infty}t^{a-1}\cos{t}\diff t
    =&-\omega^{a-1}\sin{\omega}-(a-1)\int_\omega^{\infty}t^{a-2}\sin{t}\diff t\\
    =&-\omega^{a-1}\sin{\omega}+O(\omega^{a-2}).
  \end{aligned}
\end{equation}
The proof is completed.\qed

\subsection{Proof of Theorem 2.2}

Following the lines in the proof of Theorem \ref{1strstinpp}, it's easy to justify Theorem 2.2.
Here the sketch of its proof will be given and the different part from the steps
in the proof of Theorem \ref{1strstinpp} will be emphasised. Step 3.2. in the proof
of Theorem \ref{1strstinpp} is skipped since technical lemmas have been exhibited
there.

Formally speaking, the covariance function $\widetilde{K}_{sub}^H$ is the ``mixed partial
derivative'' of $K_{sub}^H$. From the point of view of the general
white noise theory(cf. \cite{scfbm}), the sfBm is the integral process of the one
related to $\widetilde{K}_{sub}^H$ in rigorous sense, since
\begin{equation}
  \int_0^s\int_0^t\widetilde{K}_{sub}^H(x,y)\diff x\diff y=K_{sub}^H(s,t)
\end{equation}
if $H>\frac12$. Hence, it's reasonable to study the eigenproblem $\widetilde{K}_{sub}^H\varphi=\lambda\varphi$.

{\it Step 1.}
The matrix element related to the linear operator $\widetilde{K}_{sub}^H$ becomes
\begin{equation}
  (\widetilde{A}_{sub})_{n,m}=2H(2H-1)\int_0^1\int_0^1(\abs{x-y}^{2H-2}-(x+y)^{2H-2})\sin(n^*x)\sin(m^*y)\diff x\diff y.
\end{equation}
By spiliting the right hand side of the above identity into two integrals,
$(\widetilde{A}_{sub})_{n,m}$ has a decomposition
$(\widetilde{A}_{sub})_{n,m}=2\widetilde{A}_{n,m}-\widetilde{A}^{(1)}_{n,m}$,
where $\widetilde{A}_{n,m}$ corresponds to the part of fractional Brownian
noise(cf. \cite{KL,eigenfbm}).

To calculate $\widetilde{A}_{n,m}$, through imitating the method in \cite{KL},
$[0,1]\times[0,1]$ can be represented by a parallelogram $V_1$ minus two
triangles $V_2$ and $V_3$(see Figure \ref{domainfortldA}), where $V_1$ is enclosed by the lines $y=0$, $y=1$,
$y-x=1$ and $y-x=-1$; $V_2$ enclosed by $x=0$, $y=0$ and $y-x=1$; $V_3$
enclosed by $x=1$, $y=1$ and $y-x=-1$.
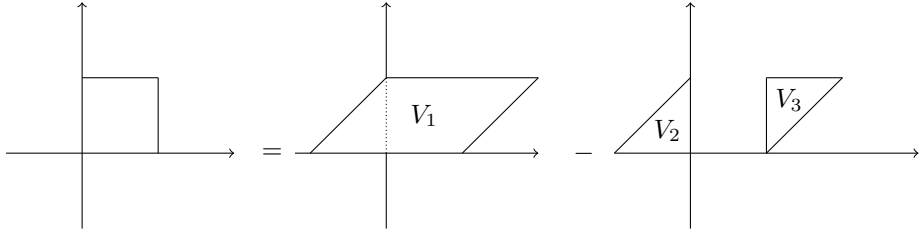
\begin{figure}[htbp]
  \centering
  \begin{tikzpicture}
    \draw[->] (-1,0) -- (2,0);
    \draw[->] (0,-1) -- (0,2);
    \draw (0,1)--(1,1);
    \draw (1,1)--(1,0);
    \draw (2.5,0) node {$=$};
    \draw[->] (2.8,0) -- (6,0);
    \draw[->] (4,1) -- (4,2);
    \draw (4,-1)--(4,0);
    \draw[densely dotted] (4,0)--(4,1);
    \draw (3,0)--(4,1);
    \draw (4,1)--(6,1);
    \draw (5,0)--(6,1);
    \draw (6.6,0) node {$-$};
    \draw[->] (7,0) -- (11,0);
    \draw[->] (8,-1) -- (8,2);
    \draw (7,0)--(8,1);
    \draw (9,0)--(9,1);
    \draw (9,1)--(10,1);
    \draw (9,0)--(10,1);
    \draw (4.5,0.5) node {$V_1$};
    \draw (7.7,0.3) node {$V_2$};
    \draw (9.3,0.7) node {$V_3$};
  \end{tikzpicture}
  \caption{Domains for calculating $\widetilde{A}_{n,m}$.}
  \label{domainfortldA}
\end{figure}
By denoting
\begin{align}
  \widetilde{R}_{n,m}^1&=\iint_{V_1}|x-y|^{2H-2}\sin(n^*x)\sin(m^*y)\diff x\diff y\\
  \widetilde{R}_{n,m}^{2,3}&=\iint_{V_2\cup{V_3}}|x-y|^{2H-2}\sin(n^*x)\sin(m^*y)\diff x\diff y,
\end{align}
it's clear that
\begin{equation}
  \widetilde{A}_{n,m}=H(2H-1)(\widetilde{R}_{n,m}^1-\widetilde{R}_{n,m}^{2,3}).
\end{equation}
By changing the variables $u=y-x$, $v=y$ in double integral, it can be deduced that
\begin{equation}
  \widetilde{R}_{nm}^1=\int_0^1\sin(m^*v)\sin(n^*v)\diff v\int_{-1}^1|u|^{2H-2}\cos(n^*u)\diff u.
\end{equation}
To calculate $\widetilde{R}_{n,m}^{2,3}$, first of all, perform the mapping $V_3$ to
$V_2$ by changing of variables $x'=1-x$, $y'=1-y$. Then the integral over this
region is greatly simplified under the change of variables $u=x+y$, $v=x-y$, which gives
\[
\widetilde{R}_{n,m}^{2,3}=\frac{(-1)^{m+n+1}}2\int_0^1u^{2H-2}\diff u\int_{-u}^u\cos(n^*\frac{u-v}{2}+(-1)^{m+n}m^*\frac{v+u}{2})\diff v.
\]

To calculate $\widetilde{A}^{(1)}_{n,m}$, two sub-domains $I_{1}$, $I_{2}$ are
chosen just as done in Step 3.1. of Section 4.1. Designating
\begin{align}
  \widetilde{Q}_{n,m}^1=\iint_{I_1}(x+y)^{2H-2}\sin(n^*x)\sin(m^*y)\diff x\diff y\\
  \widetilde{Q}_{n,m}^2=\iint_{I_2}(x+y)^{2H-2}\sin(n^*x)\sin(m^*y)\diff x\diff y
\end{align}
it gives
\begin{equation}
  \widetilde{A}^{(1)}_{n,m}=2H(2H-1)(\widetilde{Q}_{n,m}^1+\widetilde{Q}_{n,m}^{2}).
\end{equation}

{\it Step 2.}
Now, it's time to extract diagonal and off-diagonal information from
$(\widetilde{A}_{sub})_{n,m}$. By setting
$(\widetilde{D}_{sub})_{n,m}=(\widetilde{A}_{sub})_{n,m}\delta_{n,m}$, and
$(\widetilde{O}_{sub})_{n,m}=(\widetilde{A}_{sub})_{n,m}-\widetilde{D}_{n,m}$, a
decomposition $(\widetilde{A}_{sub})_{n,m}=(\widetilde{D}_{sub})_{n,m}+(\widetilde{O}_{sub})_{n,m}$
is obtained. Moreover, there hold
\begin{align}
  (\widetilde{D}_{sub})_{n,n}=&2H(2H-1)(\widetilde{R}_{n,n}^1-\widetilde{R}_{n,n}^{2,3}-\widetilde{Q}_{n,n}^1-\widetilde{Q}_{n,n}^2),\\
  (\widetilde{O}_{sub})_{n,m}=&2H(2H-1)(\widetilde{R}_{n,m}^1-\widetilde{R}_{n,m}^{2,3}-\widetilde{Q}_{n,m}^1-\widetilde{Q}_{n,m}^2),\quad n\neq m.
\end{align}
The details for handling $\widetilde{A}_{n,m}$ part will be emphasised, but the
ones for $\widetilde{A}_{n,m}^{(1)}$ will be omitted except for the conclusions.

{\it Step 2.1.} Calculate $\widetilde{A}_{n,m}$ and $\widetilde{A}_{n,m}^{(1)}$
in the case of $m>n\gg1$. Simple calculations show that
\begin{align}
  \widetilde{R}_{n,m}^1&=0.\\
  \widetilde{R}_{n,m}^{2,3}&=\frac{(-1)^{m+n+1}}{m^*+(-1)^{m+n}n^*}\int_0^1u^{2H-2}(\sin(m^*u)+(-1)^{m+n}\sin(n^*u))\diff u.
\end{align}
Processing as Step 3. in Section 4.1, it's no trouble to verify
\begin{align}
  &\begin{aligned}
    \widetilde{Q}_{n,m}^1=&-\frac1{2(m^*+n^*)}\int_0^1u^{2H-2}(\sin(m^*u)+\sin(n^*u))\diff u\\
    &+\frac1{2(m^*-n^*)}\int_0^1u^{2H-2}(\sin(m^*u)-\sin(n^*u))\diff u.
  \end{aligned}\\
  &\begin{aligned}
    \widetilde{Q}_{n,m}^2
    =&\frac{(-1)^{m+n}}{2(m^*+n^*)}\int_1^2u^{2H-2}(\sin(m^*u)+\sin(n^*u))\diff u\\
    &+\frac{(-1)^{m-n}}{2(m^*-n^*)}\int_1^2u^{2H-2}(\sin(m^*u)-\sin(n^*u))\diff u.
  \end{aligned}
\end{align}
Using \eqref{12sin2nonremainder},\eqref{12sin2remainder},
\eqref{01sin2nonremainder} and \eqref{01sin2remainder}, it implies that
\begin{equation}
\begin{aligned}
&\widetilde{R}_{n,m}^1-\widetilde{R}_{n,m}^{2,3}-\widetilde{Q}_{n,m}^1-\widetilde{Q}_{n,m}^2\\
=&\frac{3(-1)^{m+n}\Gamma(2H-1)\cos(\pi{H})}{2(m^*+(-1)^{m+n}n^*)}(\frac1{m^{*2H-1}}+(-1)^{m+n}\frac1{n^{*2H-1}})\\
&+(-1)^{m+n}\frac{\Gamma(2H-1)\cos(\pi{H})}{2(m^*-(-1)^{m+n}n^*)}(\frac{1}{m^{*2H-1}}-(-1)^{m+n}\frac1{n^{*2H-1}})+O(\frac1{mn}).
\end{aligned}
\end{equation}
Using the same techniques as {\it Step 3.3.} in Lemma 4.3, it's obvious that $\widetilde{R}_{n,m}^1-\widetilde{R}_{n,m}^{2,3}-\widetilde{Q}_{n,m}^1-\widetilde{Q}_{n,m}^2{\asymp}m^{-1}n^{2H-1}.$

{\it Step 2.2.} Calculate $\widetilde{A}_{n,m}$ and $\widetilde{A}_{n,m}^{(1)}$
in the case of $m=n\gg1$. It's easy to check that
\begin{align}
  &\begin{aligned}
    \widetilde{R}_{m,m}^1
    =&\int_0^1\sin(m^*v)\sin(m^*v)\diff v\int_{-1}^1|u|^{2H-2}\cos(m^*u)\diff u\\
    =&2(\frac{\Gamma(2H-1)\sin(\pi{H})}{m^{*2H-1}}+\frac{(-1)^m}{m^*}+O(\frac{1}{m^{2}}))
  \end{aligned}\\
  &\begin{aligned}
    \widetilde{R}_{m,m}^{2,3}
    =&-\frac1{m^*}\int_0^1u^{2H-2}\sin(m^*u)\diff u\\
    =&\frac{\Gamma(2H-1)\cos(\pi{H})}{m^{*2H}}+O(\frac{1}{m^{3}}).
  \end{aligned}
\end{align}
Following the similar procedures as {\it Step 3.} in Section 4.1, there hold
\begin{align}
\widetilde{Q}_{m,m}^1=&\frac12\int_0^1u^{2H-1}\cos(m^*u)\diff u-\frac1{2m^*}\int_0^1u^{2H-2}\sin(m^*u)\diff u.\\
\widetilde{Q}_{m,m}^2=&\frac1{2m^*}\int_1^2u^{2H-2}\sin(m^*u)\diff u+\frac12\int_1^2u^{2H-1}\cos(m^*u)\diff u-\int_1^2u^{2H-2}\cos(m^*u)\diff u.
\end{align}
Using \eqref{12cos2}, \eqref{12sin2nonremainder},
\eqref{12cos1} and \eqref{01cos1}, it leads to
\[
\begin{aligned}
&\widetilde{R}_{n,n}^1-\widetilde{R}_{n,n}^{2,3}-\widetilde{Q}_{n,n}^1-\widetilde{Q}_{n,n}^2\\
=&\frac{2\Gamma(2H-1)\sin(\pi{H})}{n^{*2H-1}}-\frac{(2\Gamma(2H)+3\Gamma(2H-1))\cos(\pi{H})}{2n^{*2H}}+O(\frac{1}{n^{2}}).
\end{aligned}
\]

{\it Step 2.3.} Summarise the asymptotic information for the matrix elements of
$\widetilde{A}_{sub}$. The asymptotics for diagonal piece of $(\widetilde{A}_{sub})_{n,m}$
is
\begin{equation}
  (\widetilde{D}_{sub})_{m,m}=\frac{2\Gamma(2H+1)\sin(\pi{H})}{m^{*2H-1}}-\frac{(4H+1)\Gamma(2H+1)\cos(\pi{H})}{2m^{*2H}}+O(\frac{1}{m^{2}})
\end{equation}
if $m\gg1$, and the ones for off-diagonal piece is
\begin{equation}
  (\widetilde{O}_{sub})_{n,m}\asymp\frac{1}{mn^{2H-1}}
\end{equation}
if $m>n\gg1$.

{\it Step 3.}
Noticing that $\widetilde{D}_{sub}$ is self-adjoint and positive, given any
$\beta\in(0,1)$, $\widetilde{O}_{sub}$ can also be written as
$\widetilde{O}_{sub}=\widetilde{D}_{sub}^{\beta}\widehat{O}_{der}\widetilde{D}_{sub}^{\beta}$
with $\widehat{O}_{der}=\widetilde{D}_{sub}^{-\beta}
\widetilde{O}_{sub}\widetilde{D}_{sub}^{-\beta}$. Since
the order of the elements of $\widetilde{D}_{sub}^{-\beta}$ is $m^{\beta(2H-1)}$
when $m\gg1$, the order of the ones of $\widehat{O}_{der}$ is
$m^{(2H-1)\beta-1}n^{(2H-1)\beta-2H+1}$ when $m>n\gg1$. If $\beta\in(0,\frac12)$,
it's easy to verify whether the elements of $\widehat{O}_{der}$ are square
summable. In fact,
\[
\begin{aligned}
\sum_{m>n}(\widehat{O}_{der})_{n,m}^2
\lesssim&\sum_{m>n}m^{2(2H-1)\beta-2}n^{2(2H-1)\beta-4H+2}\\
=&\sum_{n}n^{2(2H-1)\beta-4H+2}\sum_{m=n+1}^{\infty}m^{2(2H-1)\beta-2}\\
\lesssim&\sum_{n}n^{4(2H-1)\beta-4H+1}.
\end{aligned}
\]
The square summability of $\widehat{O}_{der}$ is verified since
$4(2H-1)\beta-4H+1\in(-4H+1,-1)$ when $\beta\in(0,\frac12)$. Therefore,
$\widehat{O}_{der}$ is a Hilbert-Schmidt operator(and thus compact). Using Lemma
\ref{egvlforcgtfofcop}, it is immediately obtained that
\begin{equation}
  |\lambda_n(\widetilde{O}_{sub})|\lesssim{n^{-2H+\frac12+\delta}}.
\end{equation}

Setting $K_1=\widetilde{D}_{sub}$, $K_2=\widetilde{O}_{sub}$ and $j=n^\alpha$ in Lemma
\ref{egvlforamtfofcop}, it can be deduced that
\begin{equation}
  \lambda_n(\widetilde{A}_{sub})\le\frac{\kappa_H}{n^{2H-1}}+O(n^{-2H+\alpha})+O(n^{-\alpha(2H-\frac12-\delta)}).
\end{equation}
Choosing $2H-\alpha=\alpha(2H-\frac12)$(i.e. $\alpha=\frac{2H}{2H+\frac12}$), it
implies that
\begin{equation}
  \lambda_n(\widetilde{A}_{sub})\le\frac{\kappa_H}{n^{2H-1}}+o(n^{-\frac{2H(4H-1)}{4H+1}+\delta}).
\end{equation}
Repeating the argument with $K_1=\widetilde{A}_{sub}$, $K_2=-\widetilde{O}_{sub}$ gives
\begin{equation}
  \lambda_n(\widetilde{A}_{sub})\ge\frac{\kappa_H}{n^{2H-1}}+o(n^{-\frac{2H(4H-1)}{4H+1}+\delta}).
\end{equation}

The proof is completed.\qed

\section{Acknowledgement}
Ying-Li WANG, one of the authors, would like to thank Professor Ping HE and Assistant Professor
Qing-Hua WANG for their patient discussion.

\end{document}